\title[Hilbert property for degree-one del Pezzo surfaces]{Hilbert property for low-genus families and degree-one del Pezzo surfaces}
\subjclass[2020]{14G05, 14J20 (14G25, 14G27, 11G35).}
\author{Julian Demeio}
\address{Julian Demeio \\ 
Institut f\"ur Algebra, Zahlentheorie und Diskrete Mathematik \\
Leibniz Universit\"at Hannover \\
Welfengarten 1  \\
Hannover \\
30167 \\
Germany
}
\email{demeiojulian@yahoo.it}
\author{Sam Streeter}
\address{Sam Streeter \\
School of Mathematics \\
University of Bristol \\
Woodland Road \\
Bristol \\
BS8 1UG \\
UK}
\email{sam.streeter@bristol.ac.uk}
\author{Rosa Winter}
\address{Rosa Winter \\
Mathematisches Institut \\
Albert-Ludwigs-Universit\"at Freiburg \\
Ernst-Zermelo-Stra{\ss}e 1 \\
79104 Freiburg \\
Germany}
\email{rosa.winter@math.uni-freiburg.de}
\theoremstyle{definition}
\newtheorem{mydef}{Definition}[section]
\newtheorem{note}[mydef]{Note}
\newtheorem{remark}[mydef]{Remark}
\newtheorem*{question}{Question}
\newtheorem{example}[mydef]{Example}
\theoremstyle{plain}
\newtheorem{theorem}[mydef]{Theorem}
\newtheorem{proposition}[mydef]{Proposition}
\newtheorem{lemma}[mydef]{Lemma}
\let\originalleft\left
\let\originalright\right
\renewcommand{\left}{\mathopen{}\mathclose\bgroup\originalleft}
\renewcommand{\right}{\aftergroup\egroup\originalright}
\renewcommand{\P}{\mathbb{P}}
\newcommand{\A}{\mathbb{A}}
\DeclareMathOperator{\ch}{char}
\DeclareMathOperator{\Gal}{Gal}
\DeclareMathOperator{\Pic}{Pic}
\DeclareMathOperator{\Spec}{Spec}
    \DeclareFontFamily{U}{wncy}{}
    \DeclareFontShape{U}{wncy}{m}{n}{<->wncyr10}{}
    \DeclareSymbolFont{mcy}{U}{wncy}{m}{n}
    \DeclareMathSymbol{\Sh}{\mathord}{mcy}{"58} 
\newcommand*{\da@rightarrow}{\mathchar"0\hexnumber@\symAMSa 4B }
\newcommand*{\da@leftarrow}{\mathchar"0\hexnumber@\symAMSa 4C }
\newcommand*{\xdashrightarrow}[2][]{%
  \mathrel{%
    \mathpalette{\da@xarrow{#1}{#2}{}\da@rightarrow{\,}{}}{}%
  }%
}
\newcommand{\xdashleftarrow}[2][]{%
  \mathrel{%
    \mathpalette{\da@xarrow{#1}{#2}\da@leftarrow{}{}{\,}}{}%
  }%
}
\newcommand*{\da@xarrow}[7]{%
\sbox0{$\ifx#7\scriptstyle\scriptscriptstyle\else\scriptstyle\fi#5#1#6\m@th$}%
 \sbox2{$\ifx#7\scriptstyle\scriptscriptstyle\else\scriptstyle\fi#5#2#6\m@th$}%
  \sbox4{$#7\dabar@\m@th$}%
  \dimen@=\wd0 %
  \ifdim\wd2 >\dimen@
    \dimen@=\wd2 %
  \fi
  \count@=2 %
  \def\da@bars{\dabar@\dabar@}%
  \@whiledim\count@\wd4<\dimen@\do{%
    \advance\count@\@ne
    \expandafter\def\expandafter\da@bars\expandafter{%
      \da@bars
      \dabar@ 
    }%
  }%
  \mathrel{#3}%
  \mathrel{%
    \mathop{\da@bars}\limits
    \ifx\\#1\\%
    \else
      _{\copy0}%
    \fi
    \ifx\\#2\\%
    \else
      ^{\copy2}%
    \fi
  }%
  \mathrel{#4}%
}
\begin{document}
\begin{abstract}
We prove that the Hilbert property is satisfied by certain del Pezzo surfaces of degree one and Picard rank one over fields finitely generated over $\mathbb{Q}$. We generalize results of the first author on elliptic surfaces and employ constructions used by Desjardins and the third author to prove density of rational points. Our results are the first on the Hilbert property for minimal del Pezzo surfaces of degree one without a conic fibration.
\end{abstract}
\maketitle
\setcounter{tocdepth}{1}
\tableofcontents
\section{Introduction}
This work is primarily concerned with the following long-standing question of Colliot-Th\'el\`ene and Sansuc \cite{CTS} about the abundance of rational points on unirational varieties.
\begin{question}[Colliot-Th\'el\`ene--Sansuc]
Does every projective unirational variety over a Hilbertian field satisfy the Hilbert property?
\end{question}
Informally, the Hilbert property holds for a variety $X$ defined over a field $K$ if \emph{Hilbert's Irreducibility Theorem} (from which it takes its name) holds for polynomials with coefficients in the function field $K(X)$, and it is a measure of the abundance of rational points of $X$. Hilbert's original theorem essentially establishes the Hilbert property for $\mathbb{A}^n_K$ for $K$ a number field. See Section \ref{sec:HP} for the precise definition. The Hilbert property is closely connected to the \emph{inverse Galois problem}: if all unirational varieties over a number field $K$ satisfy the Hilbert property, then for any finite group $G$, there exists a Galois extension $L/K$ with $\Gal(L/K) \cong G$ \cite[Proof~of~Thm.~3.5.9]{SER}.

A now well-developed technique to prove the Hilbert property for a surface $X$ is to produce an ``abundance'' of curves of geometric genus $0$ or $1$ on $X$ and use these to propagate rational points. When one is able to produce an abundance of curves of genus $0$ (e.g.\ at least enough to infer the unirationality of $X$), one usually hopes to prove more than the Hilbert property: see for instance the paper \cite{SD} by Swinnerton--Dyer and the papers \cite{DS,DSW} by the authors of this paper, where weak weak approximation of the surfaces under consideration is proven. When one is able to produce many curves of genus $1$, but not (necessarily) curves of genus $0$, the Hilbert property is usually considered the best hope with the current technology.

Historically, the first use of curves of genus $1$ in this context seems to be Manin's proof of the Hilbert property of cubic surfaces with a rational point \cite[§46]{M}, by employing the $2$-dimensional linear system of cubic curves on the surface. (Consistently with the philosophy laid out above, the aforementioned result of Swinnerton--Dyer strenghtened the result of Manin by ingeniously using the curves of genus $0$ as well.) After Manin, not much work was done in this direction until the paper of Corvaja and Zannier \cite{CZ} proving the Hilbert property for the K3 surface $x^4+y^4=z^4+w^4$ using two elliptic fibrations. Their result was generalized by the first-named author of this paper to any surface with multiple genus-$1$ fibrations and Zariski-dense rational points, assuming a simple connectedness hypothesis for a certain Zariski-open subset of the surface \cite[Thm.~1.2]{DEM21}. Since then, the {\em multiple fibrations method} has appeared in multiple works in analogous contexts: see for instance \cite{STR} (for conic bundles), \cite{GCM1,GCM2} (for the potential analogue), and \cite{COC0,COC,AL} (for the integral analogue).

In this paper, we generalize \cite[Thm.~1.2]{DEM21} by replacing one of the fibrations with a general family of curves of {\em low genus} (0 or 1). Unlike in Manin’s work and in the multiple fibrations method, our family may be non-linear (as it is in our main application, Theorem \ref{thm:2}). This introduces an additional difficulty: in the linear case, every rational point lies on at least one member of the family defined over $K$, and this is no longer true in the non-linear setting. A similar use of non-linear families to study the Hilbert property first appeared in the work \cite{COC} of Coccia in the analogous context of integral points, but to our knowledge this paper is the first to apply this idea to {\em rational} points and to establish a general result of the following kind.

\begin{theorem} \label{thm:1}
Let $S$ be a smooth projective geometrically connected surface over a field $K$ which is finitely generated over $\mathbb{Q}$. Suppose we are given $n \geq 0$ genus-$1$ fibrations $\pi_i: S \rightarrow \mathbb{P}^1$, $1 \leq i \leq n$ and a low-genus family of curves $\{X_b\}_{b \in B}$ on $S$ with some base $B/K$ such that the induced map $\phi\colon X \to S$ from their total space $X$ is dominant with geometrically integral generic fiber. Let $F \subset S$ denote the union of divisors on $S$ which are vertical with respect to each $\pi_i$ and do not intersect a general member of the family $\{X_b\}_{b \in B}$ transversally. If $S \setminus F$ is algebraically simply connected and $X(K) \subset X$ is Zariski-dense, then $S$ satisfies the Hilbert property over $K$.
\end{theorem}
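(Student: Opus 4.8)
The plan is to argue by contradiction. Suppose $S$ does not satisfy the Hilbert property over $K$. By the standard description of thin sets (\cite{SER}), together with the elementary remark that a geometrically reducible finite cover of the geometrically integral $S$ has \emph{no} $K$-rational points (its $K$-points would factor through $S\times_K L$ for a nontrivial finite extension $L/K$), there are a proper closed subset $Z\subsetneq S$ and finitely many finite surjective morphisms $\rho_j\colon Y_j\to S$, $1\le j\le m$, with each $Y_j$ normal and geometrically integral and $\deg\rho_j\ge 2$, such that $S(K)\subseteq Z(K)\cup\bigcup_{j}\rho_j(Y_j(K))$. Write $B_j\subset S$ for the branch divisor of $\rho_j$ (a divisor, by purity of the branch locus). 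It suffices to produce a $K$-point of $S$ avoiding this union.

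The first step uses that $U:=S\setminus F$ is algebraically simply connected. If $B_j\subseteq F$, then $\rho_j$ is étale over $U$ and $\rho_j^{-1}(U)\to U$ is a geometrically connected finite étale cover of degree $\ge 2$ (geometrically connected because $Y_j$ is geometrically integral and $\rho_j^{-1}(F)\subsetneq Y_j$), contradicting the triviality of the étale fundamental group of $U_{\overline K}$. Hence each $B_j$ contains an irreducible component $D_j\not\subseteq F$; by the definition of $F$, either (Type A) $D_j$ is non-vertical with respect to some $\pi_{i(j)}$, or (Type B) $D_j$ meets a general member of $\phi$ transversally. Next comes the fibrewise analysis: for a genus-$1$ fibration $\pi\colon S\to\mathbb{P}^1$ and a cover $\rho\colon Y\to S$ of degree $\ge 2$, restriction to the generic fibre $S_\eta$ gives a degree-$(\deg\rho)$ cover $Y_\eta\to S_\eta$ of curves over $K(\mathbb{P}^1)$, and either $Y_\eta$ is geometrically integral with $Y_\eta\to S_\eta$ ramified — so $Y_\eta$ has genus $\ge 2$ by Riemann--Hurwitz — or, extracting the algebraic closure of $K(\mathbb{P}^1)$ in $K(Y)$ and the associated base cover $C\to\mathbb{P}^1$, the morphism $\rho$ factors through $S\times_{\mathbb{P}^1}C\to S$, which is étale off a $\pi$-vertical divisor. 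Since $S\times_{\mathbb{P}^1}C$ again carries a genus-$1$ fibration with geometrically integral generic fibre, iterating (Riemann--Hurwitz bounds the number of steps) reduces every Type-A cover to one whose restriction to a general fibre of some $\pi_i$ has geometrically integral source of genus $\ge 2$; dually, pulling back to a desingularization of the total space of $\phi$ and restricting to a general member $C_b$, a Type-B cover restricts either to a ramified cover of $C_b$ (hence of source genus $\ge 2$ when $C_b$ has genus $1$) or to a nontrivial cover of $C_b\cong\mathbb{P}^1$ for genus-$0$ members — a Type-A cover cannot split off the trivial cover over a general $C_b$, as that would force $\rho_j$ to have a rational section and hence $\deg\rho_j=1$.

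The final and hardest step is the specialization argument, following and extending \cite[Thm.~1.2]{DEM21}. The Zariski-density of $X(K)$, propagated through $\phi$ and the fibration structure, guarantees that a general fibre of each relevant $\pi_i$, and a general member of $\phi$, carries infinitely many $K$-points. On such a test curve: a cover with geometrically integral source of genus $\ge 2$ has finitely many $K$-points on that source by Faltings, so its image omits all but finitely many $K$-points of the curve; a nontrivial cover of a $\mathbb{P}^1$-member omits a non-thin set of $K$-points by Hilbert irreducibility over the Hilbertian field $K$; and a cover étale over a $\mathbb{P}^1$-member with no rational section omits all of its $K$-points. I expect the main obstacle to be coordinating these conditions across the finitely many $\rho_j$ simultaneously: one must choose a single test curve — a fibre of one $\pi_i$, or a member of $\phi$ — that meets $Z$ properly, meets each $B_j$ (so that no $\rho_j$ restricts to a cover carrying a rational section), and along which the Faltings/Hilbert inputs apply uniformly, rerouting the covers that are vertical with respect to several of the $\pi_i$ into the $\phi$-direction as in the first step. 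This is precisely the point where the transversality encoded in $F$, the properties of $\phi$ from Definition~\ref{def:lgfam}, the simple-connectedness of $U$, and the density of $X(K)$ must all be played off against one another, and where the scheme of \cite{DEM21} needs the most substantial adaptation.
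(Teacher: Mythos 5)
There is a genuine gap, and it is exactly the step you flag at the end as "the main obstacle": coordinating the finitely many covers across the fibrations is not a technical afterthought but the core of the argument, and your sketch does not supply it. The paper does not redo this work either; it imports the entire proof of \cite[Thm.~1.1]{DEM21} as Theorem~\ref{thm:redn} (together with Lang--N\'eron, McQuillan's extension of Faltings, and the Merel-type theorem of Colliot-Th\'el\`ene in the appendix to \cite{DW}, to pass from number fields to fields finitely generated over $\mathbb{Q}$), reducing at once to covers that are vertically ramified with respect to \emph{every} $\pi_i$; only those are then attacked with the low-genus family. Your "single test curve" plan fails precisely where that machinery is needed: a cover whose branch divisor is vertical for $\pi_i$ restricts to an \emph{\'etale} cover of a general fibre $E$ of $\pi_i$, typically an isogeny or torsor whose image of $K$-points is a coset of a finite-index subgroup of $E(K)$, and finitely many such cosets can cover all of $E(K)$; so on that fibre neither Faltings nor Hilbert irreducibility excludes anything, and no single fibre of some $\pi_i$ or member of $\phi$ need work for all $\rho_j$ simultaneously. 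Handling this requires the fibration-switching argument of \cite{DEM21} (with its uniform torsion input), which your "iterate the base change $S\times_{\mathbb{P}^1}C$, Riemann--Hurwitz bounds the steps" paragraph asserts rather than proves.

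A second, independent gap: you never use the hypothesis that $\phi$ has geometrically integral generic fibre, and your substitute for it is incorrect. For the covers that survive the reduction, one must know that the pullback $X\times_{\phi,\rho_j}Y_j$ has integral generic fibre over the base of the low-genus fibration, so that its restriction to a general member is an honest integral cover of degree $\geq 2$, which is moreover ramified by the transversality forced by simple connectedness of $S\setminus F$; the paper obtains integrality from linear disjointness of the regular extension $K(X)/K(S)$ and the algebraic extension $K(Y_j)/K(S)$, which is exactly where geometric integrality of the generic fibre of $\phi$ enters. Your claim that a split piece over a general member "would force a rational section and hence $\deg\rho_j=1$" does not hold: reducibility of the restriction only produces a component of smaller degree, not a section, and without the geometric integrality hypothesis $K(X)$ could even contain $K(Y_j)$, in which case every $K$-point produced from $X(K)$ lifts to $Y_j$ and the strategy collapses. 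Relatedly, the assertion that a general fibre or member through a point of $X(K)$ has infinitely many $K$-points needs the uniform boundedness of torsion over finitely generated fields (\cite[Appendix]{DW}) to get a non-torsion point on an open locus, and "Faltings" must be McQuillan's version; these are smaller omissions, but they belong to the same missing layer of the argument.
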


Recall that a smooth variety $V$ is \emph{algebraically simply connected} if any finite surjective morphism $V' \rightarrow V$ of degree at least $2$ is ramified. The assumption that $\phi$ has geometrically integral generic fiber is necessary for the method we employ to work (see Remark \ref{rem:hypothesis is necessary}). The other hypothesis of Zariski-density on $X$, standing in place of the more common and natural hypothesis of Zariski-density of rational points on $S$, arises from the ``non-linear difficulty'' mentioned above.

We apply Theorem \ref{thm:1} to certain families of \emph{del Pezzo surfaces} (smooth projective varieties $X$ of dimension $2$ with ample anticanonical class $-K_X$). It may be that all del Pezzo surfaces with a rational point are unirational and so within the remit of the above question \cite[Rem.~9.4.11]{POO}. Since del Pezzo surfaces of degree one always possess a rational point (the base-point of the anticanonical linear system), we are thus motivated to prove that they satisfy the Hilbert property. 

The arithmetic complexity of del Pezzo surfaces is notionally governed by the degree $d = (-K_X)^2 \in \{1,\dots,9\}$. Let $X/k$ be a degree-$d$ del Pezzo surface over a Hilbertian field $k$ with a rational point. For $d \geq 5$, such $X$ is rational \cite[Thm.~9.4.29]{POO}; as the Hilbert property is a birational invariant of smooth varieties, Hilbert's original result then implies the Hilbert property. The Hilbert property is established for $d = 4$ in work of the second author \cite{STR}. Previous joint work of the authors \cite{DSW} establishes the Hilbert property when $d = 2$ and $X/k$ contains a rational point outside a closed subset of $X$. This result immediately implies the Hilbert property for $d = 3$ \cite[Cor.~3.7]{DSW}.

When $d = 1$, the first two authors \cite{DS} proved the Hilbert property when $k$ is a number field and $X$ possesses a general conic fibration. Little is known about rational points on minimal degree-one del Pezzo surfaces with no conic fibration, which are those of Picard rank 1. Desjardins and the third author \cite{DW} proved density of rational points for an infinite family of del Pezzo surfaces of degree one. The surfaces in this family are endowed with a family of low-genus curves contained (non-linearly) in $|{-3K_S}|$. It is this family of curves that Desjardins and the third author use to prove density. Nijgh \cite{NIJ} later generalized the work \cite{DW} by enlarging the family of surfaces to which the method can be applied. Salgado and van Luijk \cite{SvL} proved that $\mathbb{Q}$-rational del Pezzo surfaces with dense rational points are dense in a parameter space for real del Pezzo surfaces, following work of V\'arilly-Alvarado \cite{VA}, Ulas \cite{ULA1,ULA2}, Ulas and Togb\'e \cite{UT} and Jabara \cite{JAB}.

The primary result of this paper is the proof of the Hilbert property for the family of surfaces for which Nijgh proves Zariski-density of rational points.

\begin{theorem} \label{thm:2}
Let $K$ be a field finitely generated over $\mathbb{Q}$. Let $a_4(u) = au + b,a_6(u) = cu^2 + du + e \in K[u]$ be polynomials and 
$f(t) := f_3t^3 + f_2 t^2 + f_1 t + f_0 \in K[t]$ be a cubic polynomial. Let $S \subset \mathbb{P}(2,3,1,1)$ be the surface given by
\[
S: y^2  = x^3 + a_4(f(z/w))x w^4 + a_6(f(z/w))w^6.
\]
Assume that $S$ is smooth, hence a del Pezzo surface of degree one. Let $\rho: \mathcal{E} = \text{Bl}_{\mathcal{O}} S \rightarrow S$ be the blowup of $S$ at the base-point $\mathcal{O}=[1:1:0:0]$ of $|{-K_S}|$ and $\pi: \mathcal{E} \rightarrow \mathbb{P}^1$ be the elliptic fibration given by projection to $[z:w]$. Suppose that there is $P = [x_0:y_0:z_0:w_0] \in S(K)$ with $w_0 \neq 0$, $3z_0 + 2f_2w_0/f_3 \neq 0$ and $f(t) - f(z_0/w_0)$ separable such that $\rho^{-1}(P)$ is non-torsion on its fiber of $\pi$. Then $S$ satisfies the Hilbert property over $K$.
\end{theorem}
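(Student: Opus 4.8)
The plan is to deduce the statement from Theorem~\ref{thm:1} applied to the elliptic surface $\mathcal{E}=\mathrm{Bl}_{\mathcal{O}}S$. Since $\rho\colon\mathcal{E}\to S$ is the blow-up of the smooth projective surface $S$ at one point, $\mathcal{E}$ and $S$ are birational smooth projective surfaces, and as the Hilbert property is a birational invariant it suffices to prove that $\mathcal{E}$ satisfies it over $K$. I would verify the hypotheses of Theorem~\ref{thm:1} with $n=1$ and the single genus-$1$ fibration $\pi_1=\pi\colon\mathcal{E}\to\mathbb{P}^1$, whose zero section is the exceptional divisor of $\rho$; the transversal second structure required by that theorem will be a low-genus family manufactured from the polynomial $f$, in the spirit of the constructions of Desjardins and the third author in \cite{DW}.

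To build this family I would exploit the way the Weierstrass data factors through $f$. The substitution $u=f(z/w)$ exhibits $\mathcal{E}$ as the fibre product $T\times_{\mathbb{P}^1_u}\mathbb{P}^1_z$ along the degree-$3$ map $\mu\colon\mathbb{P}^1_z\to\mathbb{P}^1_u$, $z\mapsto f(z)$, where $T\to\mathbb{P}^1$ is the elliptic surface with affine Weierstrass equation $y^2=x^3+a_4(u)x+a_6(u)$; write $\nu\colon\mathcal{E}\to T$ for the induced triple cover. Completing the square in $u$ presents $T$ as a conic bundle over $\mathbb{P}^1$ with only a few degenerate fibres, and since $T$ carries a $K$-point (e.g.\ on the zero section) this makes $T$ a $K$-rational surface; in particular $T(K)$ is Zariski-dense and $T$ satisfies the Hilbert property. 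The image $Q:=\nu(\rho^{-1}(P))$ lies on the fibre $T_{u_0}$ with $u_0:=f(z_0/w_0)$, and is still non-torsion there, because $T_{u_0}$ is isomorphic to the fibre of $\pi$ through $\rho^{-1}(P)$. Following \cite{DW}, the point $Q$, its multiples, and the curves of the conic bundle through them organise into a morphism $\phi\colon X\to\mathcal{E}$ whose generic fibre is a geometrically integral curve of genus $\le1$. The subtle point here is that the $\mu$-pullback of the conic bundle of $T$ is only a genus-$2$ fibration on $\mathcal{E}$, so one must use $P$ to carve out a sub-family of genus-$\le1$ curves which still sweeps out $\mathcal{E}$ and is a low-genus family in the sense of Definition~\ref{def:lgfam}.

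Next I would identify the divisor $F\subset\mathcal{E}$ made up of the $\pi$-vertical divisors failing transversality with a general member of $\phi$ (cf.\ Definition~\ref{def:divisors:vertical and transversal}) and verify that $\mathcal{E}\setminus F$ is algebraically simply connected. The $\pi$-vertical divisors are components of singular fibres of $\pi$, and the transversality condition singles out the relevant ones in terms of the ramification of $\mu$ and the degeneracy locus of $\phi$, so $F$ is a finite union of fibre components. One then verifies, using the structure of the singular fibres of $\pi$ and the triviality of the algebraic fundamental group of the rational surface $\mathcal{E}$, that deleting $F$ leaves $\mathcal{E}\setminus F$ algebraically simply connected. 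The hypotheses on $P$ enter here and in the previous step: $w_0\neq0$ ensures $\rho^{-1}(P)$ is not the zero section, separability of $f(t)-f(z_0/w_0)$ ensures $\mu^{*}(u_0)$ is reduced (so $T_{u_0}$ appears over three distinct points of $\mathbb{P}^1_z$, each with multiplicity one), and $3z_0+2f_2w_0/f_3\neq0$ ensures that the member of $\phi$ through $\rho^{-1}(P)$ meets its $\pi$-fibre transversally, so that $\rho^{-1}(P)\notin F$.

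The crux, and the step I expect to be the main obstacle, is showing that $X(K)$ is Zariski-dense in $X$. This cannot be read off from density of $\mathcal{E}(K)$ — that density is itself part of what is being proved — so it must be extracted from the single non-torsion point $Q$ together with the rationality of $T$, following the propagation argument of \cite{DW}. For each $n\ge1$ the locus $\{x=x(nQ)\}$ on $T$ is a conic $B_n$, rational because it contains $nQ$, over which the fibres $T_u$ vary in a family carrying a section of infinite order through $nQ$; by Silverman's specialisation theorem $T_u(K)$ is then infinite for all but finitely many $u\in B_n(K)$. As $n$ varies the conics $B_n$ are pairwise distinct, hence Zariski-dense in $T$, and combining this with the density of $T(K)$ and transporting back along $\nu$ one produces a Zariski-dense set of $K$-points of $X$. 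The delicate point is the book-keeping that guarantees these points are genuinely spread over the base $\mathcal{E}$, rather than being confined to the $\pi$-fibre or the $\nu$-fibre through $P$, and this is exactly where the non-vanishing and separability hypotheses on $P$ are used quantitatively. Granting this, all the hypotheses of Theorem~\ref{thm:1} hold for $\mathcal{E}$, so $\mathcal{E}$, and hence $S$, satisfies the Hilbert property over $K$.
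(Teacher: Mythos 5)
Your overall strategy matches the paper's: pass to $\mathcal{E}$, keep the elliptic fibration $\pi$, build a low-genus family out of the factorization through $f$, and apply Theorem~\ref{thm:1}. However, you explicitly defer exactly the steps that constitute the actual content of the proof, so as it stands there is a genuine gap. First, the low-genus family is never constructed: you note yourself that the $\mu$-pullback of the conic bundle on $T$ has genus $2$ and that ``one must use $P$ to carve out a sub-family of genus-$\le 1$ curves,'' but you do not say which curves these are. The paper's family is not built from the multiples of $Q$ at all; it consists of the pullbacks $C_R=\theta^{-1}(T_{\theta(R)}W\cap W)$ of \emph{tangent plane sections} of the singular cubic $W\subset\mathbb{P}^3$ (the image of $S$ under the degree-$3$ map induced by $f$), parametrized by $R\in S^o$ via an incidence variety: such a curve lies in $|{-3K_S}|$ (arithmetic genus $4$) and acquires three double points over the tangency point, forcing geometric genus $\le 1$. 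Second, Theorem~\ref{thm:1} requires the family map $\phi$ to have geometrically integral generic fiber, and you simply assert this. In the paper this is Proposition~\ref{prop:GIGF}, whose proof is genuinely delicate: one traps a putative intermediate cover between the degree-$4$ extension coming from the $[-2]$-map (using the rational section $R\mapsto(R,[-2]R)$ of the family) and the \'etale degree-$2$ extension coming from the node of $C_R$ at $R$, and rules it out using torsion-freeness of the Mordell--Weil group of the rational elliptic surface. Nothing in your sketch supplies a substitute for this.

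Two further steps are asserted rather than proved. Your claim that $\mathcal{E}\setminus F$ is algebraically simply connected ``using the triviality of the algebraic fundamental group of the rational surface $\mathcal{E}$'' does not work as stated: removing two or more fibers of $\pi$ already destroys algebraic simple connectedness (covers of the base $\mathbb{G}_m$ pull back to unramified covers), so one must show that $F$ contains at most one fiber. This is the role of the paper's Proposition~\ref{prop:generic CR with fibers}, proved by an Euler-number count on the rational elliptic surface attached to a pencil of plane sections of $W$ through a general line, which pins $F$ down to (at most) the fiber at infinity; one then quotes Coccia (or the primitivity of the anticanonical class in $\Pic S$) for simple connectedness of $S\setminus Z$ — not just simple connectedness of $\mathcal{E}$. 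Finally, the Zariski-density of $X(K)$, which you call the crux, is left as unresolved ``book-keeping''; the paper obtains it from (the proof of) Nijgh's theorem \cite[Thm.~3.1]{NIJ}, which is exactly what the hypotheses $w_0\neq 0$, $3z_0+2f_2w_0/f_3\neq 0$, separability of $f(t)-f(z_0/w_0)$ and non-torsion of $\rho^{-1}(P)$ are for (your attribution of the condition $3z_0+2f_2w_0/f_3\neq 0$ to a transversality statement about $F$ is not how it is used). Note also that your appeal to Silverman specialization would need its generalization to fields finitely generated over $\mathbb{Q}$, which the paper sources through \cite[Appendix]{DW}. In short: right skeleton, but the family itself, the integrality of its generic fiber, the control of $F$, and the density input are all missing.
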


\begin{remark}
By \cite[Thm.~3.1]{NIJ}, the existence of a point $P$ as in Theorem~\ref{thm:2} is equivalent to Zariski-density of $S(K)$ since $K$ is finitely generated over $\mathbb{Q}$.
\end{remark}

\begin{remark}
Every del Pezzo surface of degree 1 can be written as a smooth weighted sextic in $\mathbb{P}(2,3,1,1)$, which, over a field $k$ with $\ch(k) \neq 2,3$, is of the form $$y^2=x^3+a_4x+a_6,$$ where $a_i\in k[z,w]$ is homogeneous of degree $i$. Conversely, each such surface is a del Pezzo surface of degree 1. The family in Theorem \ref{thm:2} is thus characterized by $f$.
\end{remark}

We prove Theorem \ref{thm:2} by applying Theorem \ref{thm:1} to the surface $\mathcal E$ endowed with one elliptic fibration $\pi$ and with a certain low-genus family of curves introduced in the aforementioned works of Desjardins and the third author \cite{DW} and generalized by Nijgh \cite{NIJ}, whose construction we sketch in the next paragraph.

The del Pezzo surfaces appearing in the family of Theorem \ref{thm:2} are endowed with a $3$-to-$1$ morphism $\theta:S \to W$ onto a (singular) cubic surface $W$ in $\P^3_K$.  Given a rational point $P \in S(K)$ lying in the inverse image $S^o:=\theta^{-1}(W^{o})$ of the smooth locus $W^o\subset W$, one may consider the intersection curve $D_P := T_{\theta(P)}W \cap W$. For a general $P$, this is a nodal curve in $P$ with geometric genus zero. The arithmetic genus of its inverse image $\theta^{-1}(D_P) \in |{-3K_S}|$ is easily computed to be $4$ via the adjunction formula. Moreover, the curve $C_P:=\theta^{-1}(D_P)$ has three nodal singularities at the (geometric) points $\{P,Q_1,Q_2\}=\theta^{-1}(\theta(P))$, making its geometric genus at most $4-3=1$. The low-genus family is then the associated family $\{\rho^{-1}(C_P)\}_{P\in S^o}$ in $\mathcal E$.

As presented in \cite{NIJ} (following the idea of \cite{DW}), this family may already be directly used to obtain Zariski-density of rational points on $S$. For each $p \in \{P,Q_1,Q_2\}$, the point $[-2]p$ also lies on $C_P$, where $[-2]:S \dashrightarrow S$ denotes the rational map induced by the self-isogeny $[-2]$ of $\mathcal E$. The zero-cycle $[-2](Q_1)+[-2](Q_2) - 2 ([-2]P) \in Z_0(S \otimes_K \bar K)$ then lies on $C_P$ and is $\Gal(\bar K/K)$-invariant, and thus defines a rational point in the Jacobian of (the normalization of) $C_P$. Following \cite{DW} and \cite{NIJ}, this element is non-torsion for $P$ lying outside of a thin set, and thus produces infinitely many rational points on $C_P$. 

The just-sketched argument for Zariski-density proves that the total space $\mathcal C$ of the family $\{C_P\}_{P \in S^o}$ has Zariski-dense rational points, verifying one of the hypotheses of Theorem \ref{thm:1}. The other two hypotheses are verfied in Propositions \ref{prop:generic CR with fibers} and \ref{prop:GIGF}: the first proves that the union $F$ is empty in our case, and the second proves that $\phi:C \to S$ has geometrically integral generic fiber.

Lastly, we show that, as $a_4$, $a_6$ and $f$ vary, the collection of $S$ as in Theorem~\ref{thm:2} with non-thin rational points and Picard rank $1$ is itself non-thin.

\begin{theorem} \label{thm:3}
    For any field $K$ finitely generated over $\mathbb{Q}$, there is a non-thin set of parameters $(a,b,c,d,e,f_0,\ldots,f_3) \in K^9$ such that $S$ as in Theorem~\ref{thm:2} is of Picard rank~$1$ and satisfies the Hilbert property.
\end{theorem}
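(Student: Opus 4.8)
The plan is to combine Theorem~\ref{thm:2} with two separate non-thinness inputs: one guaranteeing that the generic member of the family has Picard rank~$1$, and one producing the required non-torsion point $P$. First I would set up the parameter space $\mathbb{A}^9_K$ with coordinates $(a,b,c,d,e,f_0,\ldots,f_3)$ and let $\mathcal{S} \to \mathbb{A}^9_K$ be the universal family of weighted sextics as in Theorem~\ref{thm:2}, restricted to the open locus $U \subset \mathbb{A}^9_K$ where $S$ is smooth (hence a del Pezzo surface of degree one) and where the technical nondegeneracy conditions of Theorem~\ref{thm:2} ($w_0 \neq 0$, $3z_0 + 2f_2 w_0/f_3 \neq 0$, $f(t) - f(z_0/w_0)$ separable) can be met. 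The complement of $U$ is a proper closed subset, so $U(K)$ is non-thin in $\mathbb{A}^9_K$ by Hilbert irreducibility; it thus suffices to show that the parameters satisfying the two conditions above form a non-thin subset of $U(K)$, since the intersection of finitely many non-thin sets is non-thin.

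For the Picard rank~$1$ condition, I would argue that the geometric Picard rank of the generic fiber $\mathcal{S}_\eta$ (over the function field of $U$) is~$1$ — equivalently, that the $241$ exceptional curves on $\overline{\mathcal{S}_\eta}$ together generate a sublattice on which the monodromy (Galois) action is transitive enough to force $\Pic^{G} $ to be rank~$1$ — and then invoke a specialization result: the set of $K$-points of $U$ at which the Picard rank jumps is thin. This last statement is standard for families over finitely generated fields of characteristic zero; it follows, for instance, from the fact that a jump in Picard rank at a rational point forces the specialization of an appropriate étale-cohomology class, and such specialization loci are thin (one may also cite work of Terasoma or of Cadoret–Tamagnawa in the Hilbertian setting). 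To see that the geometric Picard rank is generically~$1$, it is enough to exhibit a single member with Picard rank~$1$ over some field; such examples exist in the literature (e.g. in \cite{SvL} or \cite{VA}), and since Picard rank can only drop under specialization, the generic rank is $\leq$ that value, while it is always $\geq 1$, giving equality to~$1$ on a dense open — in fact non-thin — locus.

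For the non-torsion point condition, I would use the construction already invoked in the proof of Theorem~\ref{thm:2}: the elliptic fibration $\pi \colon \mathcal{E} \to \mathbb{P}^1$ on the blow-up, together with the low-genus family transversal to $\pi$ coming from \cite{DW,NIJ}, furnishes — for parameters in a non-thin set — a section or multisection of $\pi$ of infinite order, hence a point $P$ as required. Concretely, one shows that over the generic parameter point the point $\rho^{-1}(P)$ is non-torsion on its fiber, and then that the locus where it becomes torsion is thin (torsion points lie in the image of finitely many torsion multisections, each of which cuts out a thin condition on the base — again a specialization-of-height argument, or the Néron specialization theorem over finitely generated fields). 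Combining: the parameters in $U(K)$ that give Picard rank~$1$, and those that give a valid non-torsion $P$, are each the complement of a thin set in the non-thin set $U(K)$; their intersection is therefore non-thin, and for every such parameter Theorem~\ref{thm:2} yields the Hilbert property.

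The main obstacle I anticipate is the Picard rank~$1$ step: one must control the geometric Picard lattice of the \emph{generic} member of this specific family — which has the extra structure of the fibration factoring through $z \mapsto f(z/w)$ and so a priori might carry more algebraic cycles than a generic degree-one del Pezzo surface — and then invoke a clean "Picard rank is generically minimal, and jumps on a thin set" statement valid over arbitrary finitely generated fields of characteristic zero. Making the latter precise (rather than merely "density" or "very general") is the delicate point, and I would expect the bulk of the write-up to be devoted to pinning down a reference or a short self-contained argument for the thinness of the Picard-jumping locus, together with the verification that the family is not everywhere of higher rank.
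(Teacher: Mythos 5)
There is a genuine gap, and it is precisely at the step you dismiss most quickly: producing the non-torsion point $P$ for a non-thin set of parameters. You assert that the fibration together with the low-genus family of \cite{DW,NIJ} ``furnishes a section or multisection of infinite order'', but the generic member of the family has no evident $K(U)$-point beyond the anticanonical base point $\mathcal{O}$ (whose strict transform is the zero section), so there is no section over the parameter space to which your ``the torsion locus is thin'' / N\'eron--Silverman specialization argument could be applied; the low-genus family propagates points, it does not create them. The paper's mechanism is different and essential: it works on the total space $X \subset U \times \P(2,3,1,1)$ of pairs (parameters, point), shows $X$ is rational (birational to an affine hypersurface in $\A^{11}_K$ in which a coordinate appears linearly), so that the open locus $V_N \subset X$ where the hypotheses of Theorem~\ref{thm:2} hold and $[N]\rho^{-1}(P) \neq O$ has $V_N(K)$ non-thin by Hilbert irreducibility; here $N$ is a uniform multiple of all torsion exponents of elliptic curves over $K$ (Merel, generalized to finitely generated fields), which is what makes ``non-torsion'' an \emph{open} condition. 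One then pushes forward along $\pi|_{V_N}:V_N \to U$, using that a morphism with geometrically integral generic fiber sends non-thin sets to non-thin sets. Note this only yields a \emph{non-thin} (not co-thin) set of parameters, and your general principle ``the intersection of finitely many non-thin sets is non-thin'' is false (two disjoint non-thin sets exist); the argument survives because one subtracts the \emph{thin} Picard-jumping set from a non-thin set, which you do invoke correctly later, but then your ``valid $P$'' locus would need to be co-thin, which is neither established by you nor true of what the paper proves.

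The Picard rank step also needs repair rather than a reference hunt. The geometric Picard rank of a degree-one del Pezzo surface is always $9$; what must be controlled is the rank of $\Pic$ over the ground field, i.e.\ the Galois action on the $240$ exceptional curves, so Terasoma/Cadoret--Tamagawa-type results on jumping of the \emph{geometric} Picard number are not the relevant tools. The paper's argument is elementary: apply Hilbert irreducibility to the degree-$240$ polynomial whose roots parametrize the lines; the Galois orbit structure (hence the rank) can only degrade when an irreducible factor becomes reducible upon specialization, which happens over a thin set, and injectivity of specialization on Picard groups gives $\operatorname{rk} \Pic X_u \geq \operatorname{rk} \Pic X_{\eta}$ for all $u$ --- note your stated direction (``rank can only drop under specialization'') is backwards, even though the inequality you need is the correct one. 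Finally, your worry about extra cycles forced by the factorization through $f$ is exactly the point that must be addressed by exhibiting a rank-$1$ member \emph{inside this family}: the paper uses $y^2 = x^3 + Az^6 + Bw^6$ with $A,B$ transcendental, of Picard rank $1$ by Kloosterman and Desjardins--Naskrecki, and checks it is a specialization of the generic member; a generic example from \cite{SvL} or \cite{VA} does not serve unless such membership is verified.
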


\subsection{Acknowledgements}
The authors thank Daniel Loughran for useful discussions and both the University of Bath and UniDistance Suisse for their hospitality during two week-long workshops. We also thank David Loeffler for financial support during the workshop at UniDistance Suisse. Lastly we thank Jean-Louis Colliot-Th\'el\`ene for drawing our attention to the work of Manin \cite{M}. This work was supported by a Focused Research Grant from the Heilbronn Institute for Mathematical Research. The second author is supported by the University of Bristol and the Heilbronn Institute for Mathematical Research. Over the course of this project, the third author was supported by UKRI Fellowship MR/T041609/2, ERC Horizon 2020 Consolidator Grant ID 101001051, and MSCA Postdoctoral Fellowship 101148712.\\
\begin{tabular}{c p{13cm}}
\raisebox{-35pt}{ \includegraphics[scale=0.05]{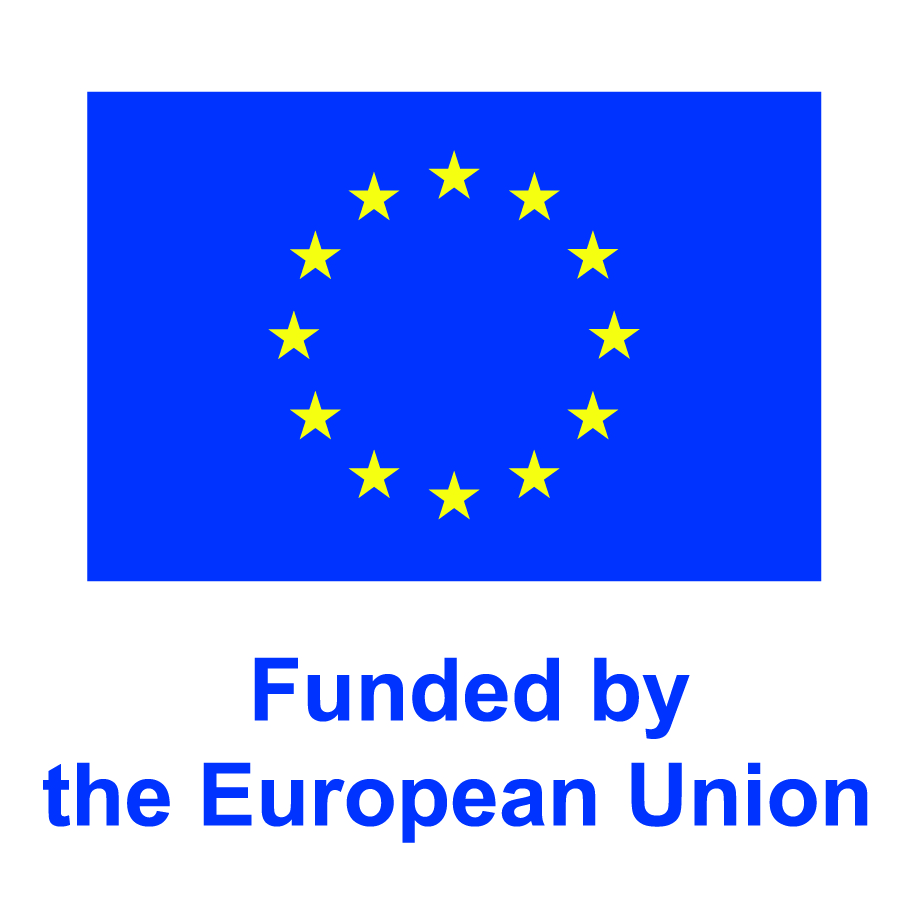}   }&   \tiny{ Funded by the European Union. Views and opinions expressed are however those of the author(s) only
and do not necessarily reflect those of the European Union or the European Research Executive Agency. Neither
the European Union nor the granting authority can be held responsible for them.}
\end{tabular}

\section{Low-genus families and the Hilbert property}\label{sec:ell families an HP}
In this section we prove Theorem~\ref{thm:1} on the Hilbert property for certain elliptic families. We begin with an overview of the Hilbert property.
\subsection{Hilbert property} \label{sec:HP}
\begin{mydef}
Let $X$ be a variety over a field $k$. We say that $A \subset X(k)$ is \emph{thin} if there exist finitely many generically finite dominant morphisms $f_i:Y_i \rightarrow X$, $i=1,\dots,r$, each of degree at least $2$, such that $X(k) \setminus \bigcup_{i=1}f_i(Y_i(k))$ is not Zariski-dense in $X$.
\end{mydef}
Without loss of generality, we can (and do) assume that each $Y_i$ is normal and geometrically integral, and each $f_i$ is finite \cite[Rems.~2.2,~2.3]{STR}.
\begin{mydef}
We say that a variety $X$ over a field $k$ satisfies/has the \emph{Hilbert property} (over $k$) if $X(k)$ is not thin. We say that a field $k$ is \emph{Hilbertian} if there exists a $k$-variety $X$ with the Hilbert property (equivalently, if $\mathbb{P}^1_k$ has the Hilbert property).
\end{mydef}
\begin{example}
Number fields are Hilbertian by Hilbert's Irreducibility Theorem; more generally and crucially for us, any field finitely generated over $\mathbb{Q}$ is Hilbertian \cite[\S13.3]{FJ}. Local fields, finite fields and algebraically closed fields are not Hilbertian. 
\end{example}
We now move on to low-genus families.
\subsection{Low-genus families}

\begin{mydef}
Let $\pi: Y \rightarrow B$ be a surjective flat morphism of varieties over a field $k$. For $g \in \mathbb{Z}_{\geq 0}$, we say that $\pi$ is a \emph{genus-$g$ fibration} if the generic fiber of $\pi$ is a geometrically integral curve of geometric genus $g$. We say that $\pi$ is an \emph{elliptic fibration} if it is a genus-$1$ fibration with smooth generic fiber and a section. If further $Y$ has dimension $2$, then we call $Y$ an \emph{elliptic surface over $B$}.
\end{mydef}
 
We recall that two curves $C$ and $D$ on a smooth surface $S$ are said to intersect {\em transversally} if they intersect transversally at all points, or equivalently, if the intersection scheme $C \cap D$ is reduced. In fact, for Theorem \ref{thm:1} we only need the weaker requirement of one reduced point in the intersection.

\begin{mydef}
Let $\pi\colon Y \rightarrow B$ be a genus-$g$ fibration over a base $B$ with $Y$ regular, and $\psi: Z \rightarrow Y$ be a cover with branch divisor $D$. We say that $\psi$ is \emph{vertically ramified with respect to $\pi$} if $D$ is contained in a union of  fibers of $\pi$ (i.e.\ $D$ is \emph{vertical} with respect to $\pi$), and \emph{horizontally ramified} otherwise.
\end{mydef}

A {\em flat family of proper subschemes} of a variety $Y/k$ with {\em base} $B/k$ is a closed subscheme $X\subset Y \times_k B$, flat over $B$. For such a family, we call $B$ the {\em base} of the family, $Y$ the {\em total space} of the family. Write $p$ for the projection of $Y \times_k B$ to $B$, and $\phi$ for the projection to $Y$. Then the subschemes $X_b=\phi(p^{-1}(b)), b\in B$ the {\em members} of the family. We denote such a family by $\{X_b\}_{b\in B}$. 

\begin{mydef}\label{def:lgfam}
    Let $Y$ be a proper variety over a field $k$. A flat family of proper subschemes $\{X_b\}_{b\in B}$ of $Y$ is a \emph{low-genus family} if its generic element is a geometrically integral curve of geometric genus $0$ or $1$.
\end{mydef}

With the following result, essentially contained in \cite{DEM21}, we reduce the proof of Theorem~\ref{thm:1} to a special case. 

\begin{theorem}\label{thm:redn}
Let $S$ be a surface over a field $K$ finitely generated over $\mathbb{Q}$, and let $\pi_1,\ldots,\pi_n : S \to \mathbb P ^1$ be $n \geq 1$ distinct genus-$1$ fibrations. Denote by $\mathcal{C}$ the set of finite covers $\psi: Y \rightarrow S$ of degree at least $2$ which are vertically ramified with respect to each $\pi_i$.
If, for every finite subset $\{\psi_j:Y_j \to S, i=j,\ldots,r\} \subset \mathcal C$, the set
\[
S(K) \setminus \cup_{j=1}^r \psi_j(Y_j(K))
\]
is Zariski-dense in $S$, then $S$ satisfies the Hilbert property.
\end{theorem}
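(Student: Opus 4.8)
The plan is to verify the Hilbert property for $S$ using only the covers in $\mathcal C$: given a hypothetical witness to thinness, we trade each cover \emph{not} in $\mathcal C$, up to a proper closed subset of $S$, for finitely many covers in $\mathcal C$. As a Zariski-dense subset of the irreducible variety $S$ stays Zariski-dense after deleting a proper closed subset, the hypothesis then finishes the proof. The argument follows the strategy of \cite[proof of Thm.~1.2]{DEM21} (compare also \cite{DS}). By the reductions recorded after the definition of thin sets, it suffices to show that for every finite family of finite covers $\psi_j\colon Y_j\to S$ ($1\le j\le r$) of degree at least $2$, with each $Y_j$ normal and geometrically integral, the set $S(K)\setminus\bigcup_j\psi_j(Y_j(K))$ is Zariski-dense. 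Those $\psi_j$ lying in $\mathcal C$ may be kept, since by hypothesis their joint contribution leaves a Zariski-dense complement; it thus remains to absorb each of the others.

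Fix one such cover $\psi\colon Y\to S$, whose branch divisor has a component horizontal with respect to some $\pi:=\pi_i$, and consider its restriction over the generic point $\eta$ of $\mathbb{P}^1$. Writing $\kappa:=K(\mathbb{P}^1)$ (again finitely generated over $\mathbb Q$), the generic fibre $S_\eta=\pi^{-1}(\eta)$ is a smooth genus-$1$ curve over $\kappa$; since the horizontal branch component meets $S_\eta$, the cover $Y_\eta\to S_\eta$ obtained by restriction is \emph{ramified}. Take the Stein factorization $Y\xrightarrow{\beta}B\xrightarrow{\gamma}\mathbb{P}^1$ of $\pi\circ\psi$, with $B$ a smooth geometrically integral curve. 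If $\deg\gamma\ge2$, then every $K$-point of $Y$ lies above a $K$-point of $B$, so $\psi(Y(K))$ is contained in the image of the $K$-points of the normalization of $S\times_{\mathbb{P}^1}B$; the latter is a finite cover of $S$ of degree $\ge2$, ramified only over fibres of $\pi$ — hence vertically ramified with respect to $\pi$ — and of degree $\deg\gamma<\deg\psi$, since $\psi$, being horizontally ramified with respect to $\pi$, cannot itself be of this form.

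If instead $\deg\gamma=1$, then $\pi\circ\psi\colon Y\to\mathbb{P}^1$ has geometrically integral generic fibre, which by Riemann–Hurwitz — a ramified cover of a genus-$1$ curve has genus $\ge2$ — is a smooth geometrically integral curve of genus $\ge2$. Hence, for every $t\in\mathbb{P}^1(K)$ outside a thin set, the fibre $(\pi\circ\psi)^{-1}(t)$ is a smooth genus-$\ge2$ curve over $K$, so it has finitely many $K$-points by Faltings's theorem (extended to finitely generated fields of characteristic zero); thus $\psi(Y(K))$ meets all but a thin set of fibres of $\pi$ in finitely many points.

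Running this analysis simultaneously for all $\pi_i$ and inducting on the finite multiset of degrees of the non-$\mathcal C$ covers in play — each application of the $\deg\gamma\ge2$ step strictly lowering it — one is reduced to covers whose restriction over the generic point of each $\pi_i$ to which they are horizontal is a geometrically integral genus-$\ge2$ cover; combining the resulting fibrewise finiteness (from Faltings) over the different $\pi_i$ then shows that the image of the $K$-points of such a cover lies, up to a proper closed subset of $S$, in a finite union of images of covers in $\mathcal C$, which yields the theorem. The crux is this final reduction: a fibre-product cover $S\times_{\mathbb{P}^1}B$ is vertically ramified with respect to a single $\pi_i$ but in general horizontal with respect to the others, so one cannot naively iterate the construction — the analyses for the various $\pi_i$ must be interleaved, and it is here that it is essential that \emph{every} $\pi_i$ is a genus-$1$ fibration (so each such restriction is a ramified cover of a genus-$1$ curve, hence of genus $\ge2$), and here that the argument of \cite{DEM21} must be adapted.
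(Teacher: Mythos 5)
Your opening reductions are fine and do match ingredients of the cited argument of \cite{DEM21}: restricting a cover over the generic fibre of a $\pi_i$, splitting according to the Stein factorization, replacing a disconnected restriction by the normalization of $S\times_{\mathbb{P}^1}B$ (which is indeed vertically ramified with respect to that one $\pi_i$ and of strictly smaller degree), and using Riemann--Hurwitz plus Faltings/McQuillan for the connected, ramified case. But the proof breaks down exactly at the step you yourself flag as the crux and then only assert. After your induction terminates you are left with covers in $\mathcal C$ together with ``type (b)'' covers that are horizontal for some $\pi_i$ with integral genus-$\geq 2$ generic fibre over that $\pi_i$, and you claim that the images of the $K$-points of the latter lie, up to a proper closed subset, in a finite union of images of covers in $\mathcal C$. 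Nothing in your argument supports this, and it is not how the reduction works: such a cover is typically vertically ramified with respect to the \emph{other} fibrations, so its restriction to a general fibre of $\pi_j$ ($j\neq i$) is an \emph{unramified} cover of a genus-$1$ curve, through which infinitely many rational points of that fibre can lift; its image can be Zariski-dense and bears no containment relation to images of covers in $\mathcal C$. Fibrewise finiteness along $\pi_i$ alone cannot rule out that such a cover swallows the dense set furnished by the hypothesis, because that hypothesis only provides an \emph{unstructured} Zariski-dense set avoiding the $\mathcal C$-covers, not a set with infinitely many points on a dense family of fibres.

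What is missing is precisely the arithmetic core of Demeio's proof, which the paper imports wholesale and then generalizes: one works on fibres of a $\pi_i$ having infinitely many rational points (their existence on a dense set of fibres already requires finite generation of Mordell--Weil groups, i.e.\ Lang--N\'eron \cite{CON}, together with uniform torsion bounds, i.e.\ Merel's theorem as generalized in \cite[Appendix]{DW}), and one controls the points lifting through covers that are unramified along those fibres via the group structure of the fibre (isogeny/torsor images, cosets of subgroups of bounded index, with the uniformity across fibres again coming from the Merel-type bound); only this lets one interleave the fibrations and discard every cover not in $\mathcal C$. Your proposal never invokes Lang--N\'eron or Merel at all --- two of the three inputs the paper explicitly identifies as needing generalization to finitely generated fields --- and replaces this mechanism by an unproved (and, as stated, implausible) containment of images. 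So the proposal, as written, has a genuine gap at its decisive step, whereas the paper's proof consists of citing the argument of \cite[Thm.~1.1]{DEM21} and verifying that its three arithmetic ingredients extend to fields finitely generated over $\mathbb{Q}$.
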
 

\begin{proof}
The proof of \cite[Thm.~1.1]{DEM21} shows that, for $K$ a number field, given covers $\psi_j: Z_j \rightarrow S$, $j = 1,\dots,s$ of degrees $\geq 2$ such that $S(K) \setminus \bigcup_{j=1}^s\psi_j(Z_j(K))$ is not dense, there is a subset $\{j_1,\ldots,j_t\}\subset\{1,\ldots,s\}$ such that $\psi_{j_i}: Z_{j_i} \rightarrow S$ is contained in $\mathcal{C}$ for all $j_i\in\{j_1,\ldots,j_t\}$ and $S(K) \setminus \bigcup_{i=1}^t\psi_{j_i}(Z_{j_i}(K))$ is not dense (the simply connected hypothesis in \emph{loc.\ cit.}\ ensures that $\mathcal{C} = \emptyset$ in that setting). Since this non-density would contradict our hypothesis, we deduce the result for number fields. To deduce the same for $K$ finitely generated over $\mathbb{Q}$, note that the Mordell--Weil theorem, Faltings' theorem and Merel's theorem, which are the key ingredients in the proof of \emph{loc.\ cit.}\ in the setting of number fields, all admit generalizations to such $K$: the generalization of Mordell--Weil follows from the Lang--N\'eron theorem (see \cite[Cor.~7.2]{CON}), the generalization of Faltings' theorem follows from the work of Faltings--W\"ustholz \cite[Thm.~3,~p.~205]{FW}, and the generalization of Merel's theorem is established by Colliot-Th\'el\`ene in \cite[Appendix]{DW}.
\end{proof}

We are now ready to prove Theorem \ref{thm:1}.

\begin{proof}[Proof of Theorem \ref{thm:1}]
Define $\mathcal{C}$ to be the set in Theorem~\ref{thm:redn} if $n \geq 1$ or the set of all finite covers of $S$ of degree $\geq 2$ if $n=0$, and let $\{\psi_i:V_i \rightarrow S, i=1,\dots,s\}$ be a finite subset of $\mathcal{C}$. We will show that $S(K) \setminus \bigcup_{i=1}^r\psi_i(V_i(K))$ is Zariski-dense in $S$, so that we can deduce the result from Theorem~\ref{thm:redn}. For each $i \in \{1,\dots,s\}$, denote by $F_i$ the fiber product $X \times_{\phi,\psi_i} V_i$. Let $X_{\eta}$ (resp.\ $F_{i,\eta}$) be the generic fiber of the low-genus fibration $\pi:X \rightarrow B$ (resp.\ $F_i \to X \rightarrow B$). 

Our assumption that the generic fiber of $\phi$ is geometrically integral implies that for all $i \in \{1,\dots,s\}$, the curve $F_{i,\eta}$ is integral over $K(B)$. In fact, consider the following commutative diagram.
\[
\begin{tikzcd}
{F_{i,\eta}} \arrow[d] \arrow[r] & V_i \arrow[d, "\psi_i"] \\
X_{\eta} \arrow[r, "\phi"]  
& S.                
\end{tikzcd}
\]
Since $S$ is a smooth surface and $V_i$ is normal, the morphism $\psi_i$ is flat by miracle flatness \cite[Tag 00R4]{SP}. It follows that $F_{i,\eta} \to X_{\eta}$ is finite flat as well (both finiteness and flatness being preserved under base change), with generic fiber $\Spec K(X) \times_{\Spec K(S)} \Spec K(V_i) \cong \Spec (K(X) \otimes_{K(S)} K(V_i))$. Since $\phi$ has geometrically integral generic fiber, the extension $K(X)/K(S)$ is regular and thus linearly disjoint from the algebraic extension $K(V_i)/K(S)$. Hence $K(X) \otimes_{K(S)} K(V_i)$ is a field. In particular, the flat morphism $F_{i, \eta} \to X_{\eta}$ has integral generic fiber, and thus $F_{i, \eta}$ is also integral \cite[Prop.~4.3.8]{LIU}. 

For each $i\in\{1,\ldots,s\}$, denote by $D_i \subset S$ the branch locus of $\psi_i$, which is a divisor by Nagata--Zariski purity \cite[Tag~0BMB]{SP}. We claim that a general member of the low-genus family intersects each $D_i$ transversally: otherwise, there is at least one $i_0$ such that $D_{i_0}$ belongs to $F$, but then the restriction of $\psi_{i_0}$ over $S \setminus F$ is an unramified cover of degree $\geq 2$, contradicting that $S \setminus F$ is algebraically simply connected. Then there is at least one smooth point on $F_{i,\eta}$ which is ramified over $X_{\eta}$ (cf.~\cite[p.~595]{CZ}), hence $F_{i,\eta}$ is an integral $K(B)$-curve ramified over  $X_{\eta}$. We may spread out over some open $U \subset B$ to a morphism $\xi_i\colon F_{i,U} \to X_U$ such that, for all $P \in U$, the curve $F_{i,P}$ is a ramified cover of $X_P$.

Since $K$ is finitely generated over $\mathbb{Q}$ and $X(K)$ is Zariski-dense,  there is a non-empty Zariski open $V \subset X_U$ such that, for each $v \in V(K)$, the fiber $\pi^{-1}(\pi(v))$ has infinitely many $K$-rational points; see \cite[Appendix]{DW} for the genus-$1$ case (the genus-$0$ case is elementary, as any smooth curve of genus $0$ with one rational point has infinitely many). By assumption, $V(K)$ is Zariski-dense in $V$, and we deduce Zariski-density in $U$ of
\[
\mathcal A := \{u \in U(K): \#\pi^{-1}(u)(K)=\infty\}.
\]

When $g = 1$, it follows from Lang--N\'eron that, for each $u \in \mathcal{A}$, only finitely many points in $\pi^{-1}(u)(K)$ can lift to $F_{i,U}$ for some $i$, hence $\pi^{-1}(u)(K) \setminus \cup_{i=1}^s \xi_i(F_{i,U}(K))$ is dense in $\pi^{-1}(u)$; the same conclusion holds for $g = 0$ since $\pi^{-1}(u)$ has the Hilbert property, hence
\[
\mathcal A' :=\bigcup_{u \in \mathcal A} \pi^{-1}(u)(K) \setminus \cup_{i=1}^s \xi_i(F_{i,U}(K))
\]
is Zariski-dense in $X_U$ in either case. Thus $\phi(\mathcal A')$ is Zariski-dense in $S$. Since $\phi(\mathcal A')$ is contained in $S(K) \setminus \cup_{i=1}^s \psi_i(V_i(K))$, this concludes the proof.
\end{proof}

In the following remark we argue on the necessity of the hypothesis of geometric integrality of the generic fiber of $\phi$.

\begin{remark}\label{rem:hypothesis is necessary}
We make some observations on the applicability of Theorem~\ref{thm:1}.
\begin{enumerate}
\item A special case of Theorem \ref{thm:1} is that with no genus-$1$ fibrations ($n=0$). In this case, our proof shows that in fact $\phi(X(K))$ is non-thin in $S(K)$. 
    
\item We give an example of a surface $S$ and a family of genus-$0$ curves $\phi:X \to S$ with fibration $X\to B$ whose general member intersects any given divisor $D$ on $S$ transversally, and yet $\phi(X(K))$ is thin in $S(K)$. In this example, the generic fiber of $\phi$ is not geometrically integral, and so Theorem \ref{thm:1} (and point (1) of this remark) is not applicable.
    
    Take a general cubic pencil on $\P^2_K$, let $T$ be its base locus ($T$ consists of $9$ geometric points), and enlarge $K$ so that at least one of the points of $T$ is rational, call it $t$. Let $S$ be the blow-up of $\P^2_K$ in $T$, and let $E$ be the exceptional divisor above $t$. The proper transform of the cubic pencil makes $S$ into an elliptic surface with zero-section $E$. Let now $B$ be a non-empty Zariski-open of the dual projective space $(\P^2_K)^{\vee}$ containing none of the lines passing through the (geometric) points of $T$. Let $\{l_b\}_{b \in B}$ be the family parametrizing the proper transforms in $S$ of the corresponding lines, and let $\{X_b := [2]l_b\}_{b \in B}$ be the image of this family under the rational map $[2]:S \dashrightarrow S$ (we restrict $B$ so that $l_b$ does not pass through any of the finitely many points not contained in the domain of $[2]$). Denote by $X$ the total space of the family $\{X_b\}_{b \in B}$. It is clear that $\phi(X(K)) \subset [2](S(K))$ is thin. As we verify in the next paragraph, the general member $X_b$ intersects any given divisor $D$ on $S$ transversally, and thus this family provides an example as desired.

    Assume without loss of generality that $D$ is integral. The curve $X_b=[2]l_b$ intersects $D$ transversally if and only if $l_b$ intersects $[2]^*D$ transversally and $l_b$ contains no pair of distinct points $q_1,q_2 \in S(\overline{K})$ with $[2](q_1)=[2](q_2)\in D$. The general line $l_b \in (\P^2_K)^{\vee}$ satisfies both these conditions: indeed the map $[2]:S \dashrightarrow S$ is unramified on its domain of definition, so the divisor $[2]^*D$ is reduced, and thus the general line intersects it transversally, while the second condition describes a one-dimensional set of lines.
\end{enumerate}    
\end{remark}

\begin{remark}
    Theorem \ref{thm:1} still holds if one replaces $F$ with the set of divisors that are vertical with respect to each $\pi_i$ and do not intersect a general member of the family $\{X_b\}_{b\in B}$ transversally in at least one point. Indeed, observe that in the proof of Theorem \ref{thm:1} we did not need the general member of the family $\{X_b\}_{b \in B}$ to intersect the divisors in $F$ transversally {\em at all intersections points}, but  it was rather enough that it intersects these divisors transversally {\em in at least one point}. 
\end{remark}

\section{Low-genus families on degree-one del Pezzo surfaces}\label{sec:ell families and dPs}

We now introduce the family of degree-one del Pezzo surfaces studied by Nijgh \cite{NIJ}. Throughout this section, let $K$ be a field that is finitely generated over $\mathbb{Q}$.
\begin{mydef}\label{DefS}
Let $f(t)=f_3t^3 + f_2 t^2 + f_1 t + f_0 \in K[t]$ be a cubic polynomial and define
\[
a_4(t) := at + b, \quad a_6(t):=ct^2 + dt + e \in K[t].
\]
Let $S \subset \mathbb{P}(2,3,1,1)$ be the surface given by
\[
S: y^2  = x^3 + a_4(f(z/w))x w^4 + a_6(f(z/w))w^6.
\]
The surface $S$ is smooth if and only if the sextic branch curve $x^3 + a_4(f(z/w))x w^4 + a_6(f(z/w))w^6=0$ in $\P(2,1,1)$ is. We assume that $S$ is smooth from now on, hence a del Pezzo surface of degree one. Recall that we have $\rho: \mathcal{E} \rightarrow S$ the blowup of $S$ at the base-point $\mathcal{O}=[1:1:0:0]$ of $|{-K_S}|$ and $\pi: \mathcal{E} \rightarrow \mathbb{P}^1$ the associated elliptic fibration.
\end{mydef}

\begin{note}
The above family corresponds to the one studied by Nijgh in \cite{NIJ} via linear change of variables. Further, it strictly contains the family
\[
y^2 = x^3 + cz^6 + dz^3 w^3 + ew^6,
\]
studied in \cite{DW}, recovered by the choices $f(t) = t^3$, $a_4 = 0$ and $a_6 = ct^2 + dt + e$.
\end{note}

Consider the regular map $\theta:S \to \P^3, [x:y:z:w] \mapsto [xw:y:w^3f(z/w):w^3]$. The image $W:=\theta(S)$ is a singular cubic surface in $\mathbb{P}^3$ with equation
\[
\begin{aligned}
W:  X_1^2 X_3= X_0^3 + a_4(X_2/X_3)X_0 X_3^2 + a_6(X_2/X_3) X_3^3.
\end{aligned}
\]

\begin{lemma}\label{Lem:SING}
The following hold.
\begin{enumerate}
    \item The singular locus of $W$ is $\{[0:\pm \sqrt{c}:1:0]\}$. 
    \item The types of singularities on $W$ are as follows:
    \begin{enumerate}
    \item If $c \neq 0$, we obtain two $A_2$ singularities.
    \item If $c = 0$ and $a \neq 0$, we obtain one $A_5$ singularity.
    \item If $c = a = 0$, we obtain one $E_6$ singularity.
    \end{enumerate}
    \item The cubic surface $W$ is not a cone.
    \end{enumerate}
\end{lemma}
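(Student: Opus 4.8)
The plan is to handle the three parts in sequence, using part~(1) in the proofs of parts~(2) and~(3).

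For part~(1) I would apply the Jacobian criterion to the cubic form defining $W$, namely
\[
F = X_0^3 + aX_0X_2X_3 + bX_0X_3^2 + cX_2^2X_3 + dX_2X_3^2 + eX_3^3 - X_1^2X_3.
\]
Since $\mathrm{char}\,K = 0$, the equation $\partial F/\partial X_1 = -2X_1X_3 = 0$ forces a singular point to satisfy $X_3 = 0$ or $X_1 = 0$. The locus $X_3 = 0$ immediately yields $X_0 = 0$ (from $\partial F/\partial X_0 = 3X_0^2$) and $X_1^2 = cX_2^2$ (from $\partial F/\partial X_3$), producing exactly the two points $[0:\pm\sqrt{c}:1:0]$, which are readily checked to be singular. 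A hypothetical singular point with $X_1 = 0$, $X_3 \neq 0$ would lie in the affine chart $X_3 = 1$, where $W$ is the Weierstrass surface $X_1^2 = X_0^3 + a_4(X_2)X_0 + a_6(X_2)$; the singularity conditions for such a point $(\alpha,0,\tau)$ say that the cubic $X_0^3 + a_4(\tau)X_0 + a_6(\tau)$ has a double root at $\alpha$ and that $a_4'(\tau)\alpha + a_6'(\tau) = 0$. Choosing $z_0 \in \overline{K}$ with $f(z_0) = \tau$ (possible since $f$ is non-constant), the chain rule then shows that $(x,z) = (\alpha, z_0)$ is a singular point of the affine branch curve $x^3 + a_4(f(z))x + a_6(f(z)) = 0$, hence of the sextic branch curve of $S$, contradicting the smoothness of $S$ recorded in Definition~\ref{DefS}. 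This gives $\mathrm{Sing}(W) = \{[0:\pm\sqrt{c}:1:0]\}$.

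For part~(2), since the singularity type is a geometric invariant I would base-change to $\overline{K}$ and read off the local equation at each singular point in the chart $X_2 = 1$, where $F = X_0^3 + aX_0X_3 + bX_0X_3^2 + cX_3 + dX_3^2 + eX_3^3 - X_1^2X_3$. The key structural observation is that every monomial of $F$ other than $X_0^3$ is divisible by $X_3$. When $c \neq 0$, translating $X_1 \mapsto \sqrt{c} + v$ cancels $cX_3$ against a term of $-X_1^2X_3$ and leaves quadratic part $X_3(aX_0 + dX_3 - 2\sqrt{c}\,v)$, of rank~$2$; writing $F = X_0^3 + X_3\Psi$ and using $\Psi$ (a unit in $v$) as a local coordinate in place of $v$ puts $F$ into the $A_2$ normal form $X_0^3 + X_3\Psi$ at each of the two points. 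When $c = 0$ the point is $[0:0:1:0]$ and the quadratic part is $X_3(aX_0 + dX_3)$, of rank~$2$ if $a \neq 0$ and rank~$1$ if $a = d = 0$ (the subcase $a = c = d = 0$ does not occur, since then $a_4(f)$, $a_6(f)$ are constant and $S$ is singular, so in fact $d \neq 0$ here). In the rank-$2$ subcase I would identify an $A_5$ singularity, e.g.\ by an explicit reduction to normal form, or by computing the Milnor number $\mu = 5$ (eliminate $X_0$ via $\partial_{X_3}F$, then $X_3$ via $\partial_{X_0}F$, leaving $X_1^5 \equiv 0$) and using that corank-one hypersurface singularities are precisely the $A_\mu$. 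In the rank-$1$ subcase, completing the square in $X_3$ against $-X_1^2X_3$ shows that $F$ is right-equivalent to $X_3^2 + X_0^3 + X_1^4 + (\text{higher-order terms})$, the $E_6$ normal form; alternatively, $\mu = 6$ together with the fact that the cubic part of the corank-$2$ reduction is the perfect cube $X_0^3$ rules out $D_6$ and forces $E_6$.

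For part~(3), the local computations of part~(2) show that the quadratic part of the local equation of $W$ at each of its singular points is nonzero, so every point of $W$ has multiplicity at most~$2$. On the other hand, a cubic surface that is a cone has a point of multiplicity exactly~$3$ — its vertex, at which, in suitable affine coordinates, the defining equation is homogeneous of degree~$3$. Hence $W$ is not a cone. I expect the main obstacle to be the normal-form identification in part~(2), especially distinguishing $E_6$ from $D_6$ and remembering to invoke the smoothness of $S$ to exclude the degenerate case $a = c = d = 0$; everything else is a short, direct computation.
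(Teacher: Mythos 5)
Your proposal is correct, and in each part it takes a route that is genuinely different from the paper's, so a comparison is worthwhile. For part (1), the paper avoids your chart-by-chart Jacobian analysis away from $X_3=0$ by observing that $S^{\mathrm{aff}}\to W^{\mathrm{aff}}$ is flat and surjective (being pulled back from $(x,y,z)\mapsto(x,y,f(z))$), and that the flat surjective image of a smooth variety is smooth; your chain-rule transfer of a putative singular point of the Weierstrass chart to a singular point of the branch sextic uses the same input (smoothness of $S$, equivalently of its branch curve) and is an explicit, equally valid version of the same descent. For part (2), the paper follows Bruce--Wall: explicit projective coordinate changes put $F_W$ into the shapes $X_0X_1X_3+G_W$ resp.\ $X_3X_0^2+G_W$, and the types are read off from \cite[Lems.~3,~4]{BW}; you instead work locally analytically (splitting lemma, Milnor numbers, Arnold normal forms). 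Your route is fine and in places cleaner --- e.g.\ the substitution $\Psi$ handles $c\neq 0$ uniformly, whereas the paper splits into $a\neq0$ and $a=0$ --- but a complete writeup would have to spell out the standard determinacy facts you invoke (corank-one isolated hypersurface singularities are exactly $A_\mu$; the semiquasihomogeneous/$\mu=6$ criterion distinguishing $E_6$ from $D_6$), and your $A_5$ elimination is only sketched; these are standard, so this is a matter of detail, not a gap. For part (3) the difference is most pronounced: the paper argues that a cone would be a cone over a \emph{smooth} plane cubic (since the singularities are isolated) and that projection from the vertex would give a dominant rational map from the geometrically rational surface $S$ to a genus-one curve, a contradiction; your multiplicity argument (a cone vertex has multiplicity $3$, while parts (1)--(2) show every point of $W$ has multiplicity at most $2$) is more elementary and bypasses rationality of $S$ entirely. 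One small slip in your write-up: in the $c=0$ case the quadratic part $X_3(aX_0+dX_3)$ has rank $1$ when $a=0$ and $d\neq 0$ (and rank $0$ if $a=d=0$, which you correctly exclude via smoothness of $S$, exactly as the paper does); your phrase ``rank $1$ if $a=d=0$'' should read ``rank $1$ if $a=0$''.
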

\begin{proof}

\begin{enumerate}
\item Let $W^{\text{aff}} \subset \A^3$ and $S^{\text{aff}} \subset \A^3$ be the restrictions of $W$ and $S$ to the affine charts $X_3 \neq 0$ and $w \neq 0$, respectively given by $x_1^2 = x_0^3 + a_4(x_2)x_0 + a_6(x_2)$ and $y^2=x^3+a_4(f(z))x+a_6(f(z))$. Note that $S^{\text{aff}} = \phi^{-1}(W^{\text{aff}})$ for the flat map
\[
\phi:\A^3 \to \A^3, \quad (x,y,z) \mapsto (x,y,f(z)),
\]
and the induced map $S^{\text{aff}}\to W^{\text{aff}}$ is also flat by preservation of flatness under base-change. 
Now, any variety $V$ with a flat surjective morphism $V' \to V$ from a smooth variety $V'$ is smooth (see \cite[Thm.~23.7]{Matsumura}), hence $W^{\text{aff}}$ is smooth. Then all singularities of $W$ lie on $X_3 = 0$.

When $X_3=0$, the equation for $W$ also gives $X_0=0$. The point $[0:1:0:0]$ is smooth, as $(\partial F_W/\partial X_3)(0,1,0,0)= -1$, with $F_W := X_0^3 + a_4(X_2/X_3)X_0 X_3^2 + a_6(X_2/X_3) X_3^3 - X_1^2 X_3$. The remaining points on the line $X_0 = X_3 = 0$ are covered by the affine chart $X_2 \neq 0$, where the equation becomes $x_1^2 x_3= x_0^3 + a_4'(x_3)x_0 x_3 + a_6'(x_3) x_3,$ with $a_4', a_6' \in k[t]$ defined by $a_4'(t)=ta_4(1/t)$ and $a_6'(t)=t^2a_6(1/t)$. We rewrite the equation as $x_1^2x_3-cx_3=O(x_0,x_3)^2$ and deduce that the only singular points with $x_0=x_3=0$ are $(0,\pm \sqrt{c},0)$.

\item To determine the singularity type/types, we follow \cite[\S2]{BW}.

\begin{enumerate}
\item First assume that $c \neq 0$.
    \begin{enumerate}
    \item If further $a \neq 0$, then
    \[
    \begin{aligned}
    & F_W\left(\frac{2\sqrt{c}}{a}\left(X_0-\frac{d}{2\sqrt{c}} - X_2\right),\frac{1}{2}(-X_2+X_3),\frac{1}{2\sqrt{c}}(X_2 + X_3), X_1\right) \\
    & = X_0 X_1 X_3 + G_W(X_0,X_1,X_2),
    \end{aligned}
    \]
    \[
    \begin{aligned}
    G_W(X_0,X_1,X_2) & := \frac{8c\sqrt{c}}{a^3}X_0^3 - \frac{12cd}{a^3}X_0^2 X_1 + \frac{2a^2b\sqrt{c} + 6\sqrt{c}d^2}{a^3}X_0 X_1^2 \\
    & + \frac{a^3 e - a^2 b d - d^3}{a^3}X_1^3 - \frac{24c\sqrt{c}}{a^3} X_0^2 X_2 \\
    & + \frac{a^3 + 24cd}{a^3}X_0 X_1 X_2 + \frac{-2a^2b\sqrt{c} - 6\sqrt{c}d^2}{a^3}X_1^2 X_2 \\
    & + \frac{24 c\sqrt{c}
    }{a^3} X_0 X_2^2 + \frac{-a^3 - 12 c d}{a^3}X_1 X_2^2 - \frac{8 c\sqrt{c}}{a^3} X_2^3.
    \end{aligned}
    \]
    Since $G_W(0,0,1) \neq 0$, one of the singularities is $A_2$ by \cite[Lem.~3]{BW}, hence so is the other singularity by symmetry.

    \item If instead $a = 0$, then
    \[
    \begin{aligned}
    & F_W\left(X_2,\frac{1}{2}\left(-X_0 + \frac{d}{2\sqrt{c}}X_1 + X_3\right),\frac{1}{2\sqrt{c}}\left(X_0 - \frac{d}{2\sqrt{c}}X_1 + X_3\right),X_1\right) \\
    & = X_0 X_1 X_3 + G_W(X_0,X_1,X_2),
    \end{aligned}
    \]
    \[
    \begin{aligned}
    G_W(X_0,X_1,X_2) & := \frac{d}{2\sqrt{c}}X_0 X_1^2 + \frac{4ce - d^2}{4c}X_1^3 + bX_1^2 X_2 + X_2^3.
    \end{aligned}
    \]
    Since $G_W(0,0,1) \neq 0$, we reach the same conclusion as in (i).
    \end{enumerate}

\item Next suppose that $c = 0$ and $a \neq 0$. Then
\[
F\left(\frac{1}{a}(X_0-dX_1),X_2,X_3,X_1\right) = X_0X_1X_3 + G_W(X_0,X_1,X_2,X_3),
\]
\[
\begin{aligned}
G_W(X_0,X_1,X_2) & := \frac{1}{a^3}X_0^3 - \frac{3d}{a^3}X_0^2 X_1 + \frac{a^2b + 3d^2}{a^3}X_0X_1^2 \\
& + \frac{a^3e - a^2bd - d^3}{a^3}X_1^3 - X_1 X_2^2.
\end{aligned}
\]
Since $g_1(X_1) := G_W(0,X_1,1)$ has a zero of order $1$ at $X_1 = 0$ and $g_0(X_0):=G_W(X_0,0,1)$ has a zero of order $3$ at $X_0 = 0$, the singularity is $A_5$ \cite[Lem.~3]{BW}.
    
\item Now assume that $c = a = 0$; by smoothness of $S$, we find 
$d \neq 0$. Then 
\[
F\left(X_2,X_1,X_3,\frac{1}{\sqrt{d}}X_0\right) = X_3 X_0^2 + G_W(X_0,X_1,X_2),
\]
\[
G_W(X_0,X_1,X_2) := X_2^3 + \frac{1}{d\sqrt{d}} X_0^2 \left(e X_0 + b\sqrt{d}X_2\right) - \frac{1}{\sqrt{d}} X_0 X_1^2.
\]
Since $G_W(0,X_1,X_2) = X_2^3$, the singularity is $E_6$ by \cite[Lem.~4]{BW}.
\end{enumerate}
\item If $W$ were a cone, then since all of its singularities are isolated by the above, it would have to be a cone over a smooth plane cubic $C$. Denote by $\phi:W \dashrightarrow C$ the natural projection. The composition $\phi \circ \theta:S \dashrightarrow C$ is a dominant rational map from $S$ to the curve $C$ of genus $1$. However, since $S$ is a geometrically rational surface, such a map cannot exist and this is a contradiction. \qedhere
\end{enumerate}
\end{proof}

We give an explicit description of the ramification locus of $\theta$:
\begin{lemma}\label{Lem:BRANCH}
    The ramification locus of $\theta$ is the union of the two (possibly coinciding) points $[0:\pm f_3 \sqrt c:1:0]$ and the two (possibly coinciding) divisors $\{z=\rho_1 w\},\{z=\rho_2 w\}$, where $\rho_1$ and $\rho_2$ denote the roots of $f'(t)$.
\end{lemma}

Here $f'(t)=3f_3t^2+2f_2t+f_1$ indicates the derivative of $f$. The points $[0:\pm f_3 \sqrt c:1:0]$ are the points of $S$ lying above the singular points $[0:\pm \sqrt{c}:1:0]$ of $W$. 

\begin{proof}
    The restriction of $\theta$ to $S^{\text{aff}}$ is the restriction of the map $\phi:\A^3 \to \A^3:(x,y,z) \mapsto (x,y,f(z))$. The map $\phi$ ramifies precisely when $f'(z)=0$, i.e.\ on the two planes defined by $z=\rho_1,z=\rho_2$. Since the ramification locus of a morphism is invariant under arbitrary base-change (the ramification locus is the support of the sheaf of relative differentials, so this follows from the fact that the latter is invariant under arbitrary base-change \cite[Tag 01V0]{SP}), and the morphism $S^{\text{aff}}\to W^{\text{aff}}$ is the base-change of $\phi$ along the inclusion $W^{\text{aff}} \subset \A^3$, the ramification locus of $\theta|_{S^{\text{aff}}}=\phi|_{S^{\text{aff}}}:S^{\text{aff}}\to W^{\text{aff}}$ are then the two plane sections defined by $z=\rho_1,z=\rho_2$ of $S^{\text{aff}}$. Thus the ramification locus of $\theta$ contains their projective closures $\{z=\rho_1 w\},\{z=\rho_2 w\}$.

    Let now $D:\{w=0\}$ be the divisor at infinity of $S$. It remains to determine on which points of $D$ the map $\theta$ ramifies. Looking at the expression for $\theta$, we see that $D$ is the scheme-theoretic inverse image $\theta^{-1}(l)$ of the line $l:\{X_0=X_3=0\}$. Hence the morphism
    \[
    \alpha=\theta|_D:D\to l, \ [x:y:z:0]\mapsto [0:y:f_3z^3:0]
    \]
    is the base-change of $\theta$ along the inclusion $l\subset W$. Again applying preservation of the ramification locus under base-change, the points of $D$ where $\theta$ ramifies coincide with those where $\alpha$ ramifies. The point $[1:1:0:0]$ is contained in both ramification divisors listed above, and so it is a ramified point. The complement $D\setminus \{[1:1:0:0]\}$ is fully contained in the chart $z\neq 0$, where $D$ restricts to the affine Weierstrass curve $y^2=x^3+cf_3, w=0$, and $\alpha$ restricts to the projection $(x,y,0)\mapsto (0,y,0)$, which ramifies precisely at the points $(0,\pm f_3 \sqrt c,0)$, giving the points $[0:\pm f_3 \sqrt c:1:0]$ in the weighted projective space.
\end{proof}

Let $W^{o} := W \setminus \{[0:\pm \sqrt{c}:1:0]\}$ be the smooth locus of $W$, and set $S^o := \theta^{-1}(W^o)$.

\begin{mydef}
We define the following curves:
\begin{enumerate}
\item For $P \in S^o$, we define $C_P:=\theta^{-1}(T_{\theta(P)}W \cap W)$ for $T_{\theta(P)} W$ the tangent plane.
\item For $P\in \mathcal{E}$, we set $E_P := \pi^{-1}(\pi(P)) \subset \mathcal{E}$.
\item For $P \in S\setminus \{\mathcal{O}\}$, we define $D_P = \rho\left(E_{\rho^{-1}(P)}\right) \subset S$ to be the unique curve in $|{-K_S}|$ through $P$ and $B_P:=\theta(D_P) \subset W$.
\end{enumerate}
\end{mydef}

\begin{remark}\label{rem:properties of C_P}
For $P \in S^o$, we have the following properties.
\begin{enumerate}
\item $C_P \sim -3K_S$ \cite[Lem.~3.9]{NIJ}.
\item When irreducible, $C_P$ has geometric genus at most $1$: for general $P$, the curve $C_P$ has (at least) three double points at $\theta^{-1}(\theta(P))$, while $C_P \in |{-3K_S}|$ has arithmetic genus 4 by adjunction. For non-generic $P$, the geometric genus is then still bounded by $1$ by semi-continuity of the genus in flat families.
\item Viewing $C_P$ as a divisor on $S$, its restriction $C_P|_{D_P}$ to $D_P$ is $(2)P+Q$, where $Q = [-2]P$ on $D_P$ as an elliptic curve with origin $\mathcal{O}$. Indeed, $P$ appears with multiplicity at least $2$ as it is a singularity of $C_P$, while the expression for $Q$ follows by noting the rational equivalences $C_P|_{D_P}\sim (-3K_S)|_{D_P} \sim 3\mathcal O$.
\end{enumerate}
\end{remark}

In view of our wish to apply Theorem \ref{thm:1} to the fibration $\pi$ and the family $\{C_P\}_{P \in S^o}$, we show that the general member of the family intersects the curves of the pencil $|{-K_S}|$ transversally.

\begin{proposition}\label{prop:generic CR with fibers}
    For each $D \in |{-K_S}|$, the set
    $$
    \{R\in S^o: |C_R\cap D|<3\}
    $$ 
    is a proper closed subset of $S^o$.
\end{proposition}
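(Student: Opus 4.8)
The plan is to translate the statement into plane projective geometry inside the plane $\Pi_P := \{X_2 = \lambda_0 X_3\} \subset \P^3$, where $\lambda_0 := f(z_0/w_0)$, and then to show that the tangent planes of the cubic surface $W$ cut out a genuinely two-parameter family of lines on $\Pi_P$. Two preliminary facts will be needed. First, since $D_P$ is the fibre over $[z_0:w_0]$ of the anticanonical map $[x:y:z:w]\mapsto[z:w]$, in the chart $w=1$ it equals the Weierstrass cubic $y^2 = x^3 + a_4(\lambda_0)x + a_6(\lambda_0)$ together with $\mathcal O$; in particular $D_P$ is an \emph{integral} plane cubic (possibly nodal or cuspidal), a Weierstrass cubic being always irreducible. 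Second, $\theta$ restricts to an isomorphism $D_P \xrightarrow{\sim} B_P = W\cap\Pi_P$: on $\{w=1\}$ the morphism $\theta$ is $(x,y,z)\mapsto(x,y,f(z))$, whose restriction to $\{z=z_0\}$ is, in the coordinates $(X_0,X_1,X_3)$ of $\Pi_P$, the identity onto $W\cap\Pi_P$. (Here $\theta$ is a finite morphism: it is given by a subsystem of $|{-3K_S}|$, so $\theta^{*}\mathcal O(1) \sim -3K_S$ is ample; consequently $S\setminus S^o$ is finite and $\theta|_{S^o}\colon S^o\to W^o$ is finite surjective.)

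Granting these, fix $R\in S^o$ with $T_{\theta(R)}W\neq\Pi_P$. Since $C_R = \theta^{-1}(T_{\theta(R)}W\cap W)$, the isomorphism $\theta|_{D_P}$ identifies $C_R\cap D_P$ with $T_{\theta(R)}W\cap B_P = L_R\cap B_P$, where $L_R := T_{\theta(R)}W\cap\Pi_P$ is a line in $\Pi_P$; hence $|C_R\cap D_P| = |L_R\cap B_P|$. (For the remaining $R\in S^o$ one has $T_{\theta(R)}W=\Pi_P$, so $C_R\supseteq D_P$ and the intersection is infinite; such $R$ form the finite set $\theta^{-1}(\gamma^{-1}(v))$, with $v$ as below, and do not lie in the locus of the statement. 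Together with $S\setminus S^o$ being finite and upper semicontinuity of fibre length for the flat family $\{C_R\}_{R\in S^o}$ restricted to $D_P$, this yields closedness of the set $\{R\in S:|C_R\cap D_P|<3\}$.) Now $B_P$ is an integral plane cubic, so the locus $\mathcal B_P\subset\Pi_P^{\vee}\cong\P^2$ of lines $\ell$ with $|\ell\cap B_P|<3$ — namely the dual curve of $B_P$ together with the pencils of lines through its (finitely many) singular points — is a proper closed subset; equivalently, a general line of $\Pi_P$ meets $B_P$ transversally in three smooth points. It therefore suffices to produce one $R\in S^o$ with $L_R\notin\mathcal B_P$.

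To this end I would observe that $R\mapsto L_R$ factors as $S^o \xrightarrow{\theta} W^o \xrightarrow{\gamma} (\P^3)^{\vee} \xrightarrow{\mathrm{pr}_v} \Pi_P^{\vee}$, where $\gamma\colon q\mapsto T_qW$ is the Gauss map of $W$ and $\mathrm{pr}_v$ is the linear projection from the point $v\in(\P^3)^{\vee}$ dual to $\Pi_P$, sending a plane $H\neq\Pi_P$ to the line $H\cap\Pi_P$, and argue that this composite is dominant. Indeed, $\theta|_{S^o}$ is surjective onto $W^o$; the Gauss image $W^{\vee}=\overline{\gamma(W^o)}$ is two-dimensional, because a reduced irreducible surface in $\P^3$ with one-dimensional Gauss image is a cone or a tangent developable of a curve (the latter of even degree), whereas $W$ is a cubic surface and is not a cone by Lemma~\ref{Lem:SING}(3); and $\mathrm{pr}_v|_{W^{\vee}}$ is dominant because $W^{\vee}$ is not a cone with vertex $v$ — were it, biduality would force $W\subseteq\Pi_P$, contradicting that the cubic surface $W$ spans $\P^3$. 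Hence the image of $R\mapsto L_R$ is dense in $\Pi_P^{\vee}$ and meets the nonempty open set $\Pi_P^{\vee}\setminus\mathcal B_P$; any $R$ in the corresponding nonempty open subset of $S^o$ then satisfies $|C_R\cap D_P|=3$, so the set $\{R\in S:|C_R\cap D_P|<3\}$ is not all of $S$.

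The step I expect to be the main obstacle is the assertion that $W^{\vee}$ is two-dimensional — equivalently, that the tangent planes of $W$ restrict to a genuine two-parameter family of lines on $\Pi_P$. Besides the classification of surfaces with degenerate Gauss map quoted above, this may instead be verified by direct computation: write $T_{\theta(R)}W$, and hence $L_R$, in the coordinates of $\Pi_P$ as an explicit function of $R=(x,y,z)$ and check that the resulting rational map to $\Pi_P^{\vee}$ has somewhere-nonvanishing Jacobian. The remaining steps — the reduction to plane geometry and the properness/closedness bookkeeping — are essentially formal.
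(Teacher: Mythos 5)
Your reduction and your key step are both sound, but the route is genuinely different from the paper's. The paper also reduces to showing that, for a general line $L \subset \Pi_P$, some plane through $L$ is tangent to $W$ at a smooth point, but it proves this by an Euler-number count: it blows up along $L \cap W$, views the pencil of plane sections through $L$ as a genus-$1$ fibration on a rational elliptic surface (Euler number $\geq 12$), and uses the singularity analysis of Lemma~\ref{Lem:SING}(2) to bound the contributions of the fibers $F_P, F_\pm$ by $10$, forcing an extra singular fiber, which must be a tangent plane section $T_{R'}W \cap W$ containing $L$. You instead prove the equivalent dominance statement for $R \mapsto L_R = T_{\theta(R)}W \cap \Pi_P$ via projective duality: $\dim W^{\vee} = 2$ by the classification of surfaces with degenerate Gauss map together with Lemma~\ref{Lem:SING}(3), and the projection from $v = [\Pi_P]$ is dominant on $W^{\vee}$ by biduality. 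Your argument has the advantage of not needing the explicit $A_2/A_5/E_6$ computations of Lemma~\ref{Lem:SING}(2) at all (only finiteness of $\mathrm{Sing}(W)$ and ``not a cone''), while the paper's argument stays within the elliptic-surface toolkit it uses elsewhere. The bookkeeping (integrality of $D_P$ and $B_P$, the identification $|C_R \cap D_P| = |L_R \cap B_P|$, generality of lines meeting an integral plane cubic in three points, and the closedness gloss) matches the paper's level of care.

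One justification in your crucial step is, however, wrong as stated: tangent developables do \emph{not} always have even degree. The degree of the tangent developable of a space curve of degree $d$ and geometric genus $g$ is $2d + 2g - 2 - \kappa$, where $\kappa$ counts the cusps (stationary points) of the curve; e.g.\ the tangent developable of the cuspidal quartic $t \mapsto [1:t^2:t^3:t^4]$ is a quintic. So parity alone does not exclude the tangent-developable case. The conclusion you need is still true and is easy to repair with tools already in the paper: a tangent developable is singular along its cuspidal edge, hence has one-dimensional singular locus, whereas $\mathrm{Sing}(W)$ consists of at most two points by Lemma~\ref{Lem:SING}(1); alternatively, an irreducible cubic surface cannot be singular along a non-planar curve (a general secant of such a curve would meet the cubic with multiplicity $\geq 4$, so the cubic would contain the secant variety, which is all of $\mathbb{P}^3$). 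With that substitution, and Lemma~\ref{Lem:SING}(3) for the cone case, your proof goes through; your proposed fallback (an explicit Jacobian computation for the Gauss map) would also work but is unnecessary.
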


\begin{proof}
    The set is clearly closed, so it suffices to find one point $R \in S$ not lying inside it.

     We treat first the case where $D$ is not the divisor $\{w=0\}\in |{-K_S}|$. The curve $D$ is irreducible, thus so is $B=\theta(D)$. This image $B$ coincides with the plane section of $W \subset \mathbb{P}^3$ given by $\Pi: w_0^3 X_2 - w_0^3f(z_0/w_0)X_3 = 0$. 

    Let $L \subset \Pi$ be a general line. This intersects $B$ transversely in 3 points. Let $\sigma: W' \rightarrow W$ be the composition of the minimal desingularization of $W$ and the blowup in the inverse image of $L \cap W$. Let $g: W' \rightarrow \mathbb{P}^1$ be the genus-$1$ fibration corresponding to the pencil of plane sections cut out in $W$ by planes containing $L$.

    Any relatively minimal model of $W'$ is a rational elliptic surface, which has (étale/\linebreak topological) Euler number $12$ \cite[Ch.~5]{SCSH}, making the Euler number $e$ of $W'$ at least $12$. The Euler number of a regular genus-$1$ fibration is the sum $\sum_v e(F_v)$ of the Euler numbers of its singular fibers $F_v$ ({\em loc.\ cit.}). 
    Consider now the planes $\Pi_{\pm}$ passing through $L$ and $[0:\pm \sqrt c:1:0]$, with the convention that  $\Pi_+=\Pi_-$ in the case $c=0$. Let $F,F_{\pm}$ be the fibers corresponding to the plane sections $\Pi\cap W$ and $\Pi_{\pm} \cap W$, respectively. 

    The diagram below summarizes part of what we defined so far. 
    \[
        \begin{tikzcd}
    D \arrow[d, phantom, sloped, "\subset"]\arrow[r, "\theta"]         & B=\Pi\cap W             \arrow[d, phantom, sloped, "\subset"]&\arrow[l, "\sigma" above, "\sim" below] F\arrow[d, phantom, sloped, "\subset"]\\
    S\arrow[r, "\theta"]& W &W' \arrow[l, "\sigma" above] \arrow[d, "g"]\\
        &  & \mathbb{P}^1
    \end{tikzcd}
    \] 
    We know that $B$ is irreducible. We claim that each $\Pi_{\pm} \cap W$ is also irreducible. In fact, since $L$ is a general line on $\Pi$, it is skew with respect to any finite set of lines in $\mathbb{P}^3$ not lying on $\Pi$. There are only finitely many lines on $W$ since it is not a cone (Lemma~\ref{Lem:SING}), and none of these lies on $\Pi$ since $\Pi \cap W=B$ is irreducible, hence $L$ is skew with respect to all lines of $W$. Then $\Pi_{\pm} \cap W$ are plane cubics with no lines, proving the claim. 
    
    From irreducibility above, we infer that $F \cong \Pi\cap W$ has $1$ component, while each $F_{\pm}$ has at most $3$ components when $F_+ \neq F_-$ (equiv.\ $c \neq 0$) and $6$ or $7$ components when $F_+ = F_-$ (equiv.\ $c = 0$): the proper transform of $\Pi_{\pm}\cap W$ and the exceptional components coming from the desingularization (the numbers come from Lemma~\ref{Lem:SING}: $A_k$ and $E_k$-singularities give $k$ exceptional components). In each case we have $\sum_{F_v \in \{F_+,F_-\}}e(F_v) \leq 8$, thus $\sum_{F_v \in \{F,F_+,F_-\}}e(F_v) \leq 10$, and there is at least one other singular fiber $F_Q \neq F,F_{\pm}$. 
    
    Now $\sigma(F_Q)$ is a plane section of $W$ with some singular point $R' \neq [0: \pm \sqrt c:1:0]$, hence $\sigma(F_Q) = T_{R'} W \cap W$. Then, for $R \in \theta^{-1}(R')$, the curve $C_R = \theta^{-1}(\sigma(F)) \subset S$ intersects $D$ in at least 3 distinct points. Since $C_R \cdot D = 3$, we are done. 
    
    Finally, let us consider the case $D = \{w=0\}$. In this case, looking at the expression for $\theta$ we see that $\theta(D)$ is the line $l = \{X_0=X_3=0\}$ and that $D=\theta^{-1}(l)$ (as already noticed in the proof of Lemma \ref{Lem:BRANCH}). Take now any $R$ such that $\theta(R)$ does not lie on any of the finitely many lines on $W$ passing through its singular points $\{[0:\pm \sqrt{c}:1:0]\}$. With this choice of $R$, the plane $T_{\theta(R)}W$ does not contain any of the points $\{[0:\pm \sqrt{c}:1:0]\}$. Indeed, if it did, then the line passing through $\theta(R)$ and this point would intersect $W$ with multiplicity $\geq 4$, with a contribution of at least $2$ both from $\theta(R)$ and the singular point.
    
    We conclude by observing that the inverse image $C_R = \theta^{-1}(T_{\theta(R)}W\cap W) = \theta^{-1}(T_{\theta(R)}W)$ intersects $D=\theta^{-1}(l)$ in the subscheme $\theta^{-1}(l\cap T_{\theta(R)}W)$, and the latter has at least $3$ distinct geometric points by Lemma \ref{Lem:BRANCH}.    \qedhere
\end{proof}

Although not needed in this paper, we include the following for completeness:

\begin{lemma}
    For general $P$, the curve $C_P$ has geometric genus $1$.
\end{lemma}

\begin{proof}
    Let $P\in S(\bar K)$ be general. 
    We first observe that, as argued at the end of Proposition \ref{prop:generic CR with fibers}, the tangent plane $T_{\theta(P)}W$ does not contain any of the singular points of $W$. 
    Thus, apart from the three simple nodal singularities in $\theta^{-1}(P)$, the only other singularities the inverse image $\theta^{-1}(T_{\theta(P)}W\cap W)$ can acquire are at the points of non-transverse intersection of the curve $T_{\theta(P)}W\cap W$ with the branch locus of $\theta$ (at transverse intersection points the inverse image is smooth, e.g.\ this can be seen using the aforementioned argument presented in \cite[p.~595]{CZ}).

    Lemma \ref{Lem:BRANCH} tells us that the branch locus $B$ of $\theta$ is the union $\theta(\{z=\rho_1w\})\cup \theta(\{z=\rho_2w\})\cup W^{\text{sing}}$, where $\rho_1$ and $\rho_2$ denote the roots of $f'(t)=0$. The divisors $\{z=\rho_iw\}_{i=1,2}$ are part of the pencil $|{-K_S}|$ and neither of them is the curve $\{w=0\}\in |{-K_S}|$. The argument of the first part of the proof of Proposition \ref{prop:generic CR with fibers} gives that the curve $T_{\theta(P)}W\cap W$ is transverse to both of them, concluding the proof.
\end{proof}

We introduce the total space $\mathcal{C}$ for the family $\{C_P\}_{P \in S^o}$.

\begin{mydef}
Let $\mathcal{C} \subset S^o \times S$ be the incidence variety whose $\overline{K}$-points are
\[
\mathcal{C}(\overline{K}) = \{(P,Q) \in S^o(\overline{K}) \times S(\overline{K}): Q \in C_P\}.
\]
Denote by $p: \mathcal{C} \rightarrow S^o$ and $\phi: \mathcal{C} \rightarrow S$ the two projections. Define the normalization morphism $\nu:\mathcal{C}^{\text{n}} \to \mathcal{C}$, and let $p^{\text{n}}:= p \circ \nu, \phi^{\text{n}}:= \phi \circ \nu$ be the two compositions. 
\end{mydef}

\begin{proposition}
    The projection $p: \mathcal{C} \rightarrow S^o$ is flat and $\mathcal C$ is geometrically integral.
\end{proposition}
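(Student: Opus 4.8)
The plan is to analyze the fibers of $\pi_1 \colon \mathcal{C} \to S^o$. The fiber over $P \in S^o$ is (set-theoretically) the curve $C_P = \theta^{-1}(T_{\theta(P)}W \cap W)$, which by Remark~\ref{rem:properties of C_P}(1) lies in $|{-3K_S}|$, so all fibers are divisors in a fixed linear system and in particular are pure of dimension $1$. Since $S^o$ is smooth (open in a del Pezzo surface) and $\mathcal{C}$, being an incidence variety cut out inside $S^o \times S$, should be seen to be Cohen--Macaulay and equidimensional of dimension $3$, flatness of $\pi_1$ will follow from miracle flatness \cite[Tag 00R4]{SP}: a morphism from a Cohen--Macaulay scheme to a regular scheme with all fibers of the expected dimension is flat. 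Concretely, I would exhibit $\mathcal{C}$ locally as the zero locus of a single equation — the equation expressing ``$Q$ lies on the tangent plane section $C_P$'' — over $S^o \times S$, which is a Cartier divisor in a smooth $4$-fold, hence Cohen--Macaulay; combined with equidimensionality of the fibers this gives flatness, and $\pi_1$ is then also proper (it is projective, being a closed subscheme of $S^o \times S$ with $S$ projective).

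For geometric integrality, I would work over $\overline{K}$ and argue in two steps. First, $\mathcal{C}$ is connected: it maps onto $S^o$ with connected (indeed geometrically connected, as plane sections of an irreducible surface) fibers $C_P$, and $S^o$ is connected, so $\mathcal{C}$ is connected. Second, $\mathcal{C}$ is irreducible and reduced: the generic fiber of $\pi_1$ over the (irreducible) base $S^o$ is the curve $C_P$ for $P$ a general point, and by Remark~\ref{rem:properties of C_P}(2) this general $C_P$ is irreducible (it is a plane cubic section pulled back, with geometric genus $\le 1$, and one checks it does not break into lines/conics for general $P$ — e.g.\ because $B_P = T_{\theta(P)}W \cap W$ is an irreducible plane cubic for general $P$ since $W$ is not a cone by Lemma~\ref{Lem:SING}(3), and $\theta$ is birational). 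A flat family over an irreducible base whose generic fiber is irreducible and reduced has an irreducible total space (the generic point of the generic fiber is the unique generic point dominating $S^o$, and flatness forbids embedded or extra components mapping to proper closed subsets having dimension $3$); reducedness of $\mathcal{C}$ follows since it is a Cartier divisor in a smooth variety that is generically reduced, hence reduced ($S_1$ + generically reduced $\Rightarrow$ reduced, and a hypersurface in a regular scheme is $S_1$, indeed Cohen--Macaulay).

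The main obstacle I expect is pinning down irreducibility of the generic fiber $C_P$ cleanly. One must rule out that for \emph{every} $P \in S^o$ the section $T_{\theta(P)}W \cap W$ is reducible; the safest route is: $W$ is an irreducible cubic surface that is not a cone (Lemma~\ref{Lem:SING}(3)), so a general tangent plane section is an irreducible (nodal) plane cubic — this is classical for cubic surfaces, the general tangent hyperplane section of a smooth-in-codimension-one projective variety being irreducible — and since $\theta$ is birational onto $W$, the pullback $C_P$ is birational to this irreducible cubic, hence irreducible for general $P$. Once that is in hand, the flatness-plus-irreducible-generic-fiber argument closes everything; I would also double-check that no component of $\mathcal{C}$ maps into the locus where the fiber degenerates, which is exactly prevented by flatness (all components of a flat family over an integral base dominate the base).
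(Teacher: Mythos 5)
Your flatness argument (realize $\mathcal{C}$ as a hypersurface in the smooth fourfold $S^o\times S$, note all fibres are curves in $|{-3K_S}|$, and apply miracle flatness) is a reasonable alternative to the paper's route, which instead observes that the fibres all have the same Hilbert polynomial and invokes \cite[Thm.~III.9.9]{HAR}. The serious problem is in your integrality argument, and it is a genuine gap: you assert that $\theta$ is birational onto $W$, and deduce irreducibility of the general $C_P$ from irreducibility of the general tangent plane section $T_{\theta(P)}W\cap W$. But $\theta$ is not birational: in the affine chart $w\neq 0$ it is $(x,y,z)\mapsto(x,y,f(z))$ with $f$ cubic, so $\theta$ has degree $3$. (This is already visible in Remark~\ref{rem:properties of C_P}: $C_P$ has arithmetic genus $4$ and acquires three double points at the three points of $\theta^{-1}(\theta(P))$, whereas a plane cubic section of $W$ has arithmetic genus $1$; also $C_P\cdot D_P=3$ while two plane cubics meet in $9$ points.) The preimage under a degree-$3$ cover of an irreducible plane cubic can perfectly well split into two or three components, so irreducibility of $T_{\theta(P)}W\cap W$ gives you nothing about $C_P$. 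Geometric integrality of $C_P$ for general $P$ is precisely the nontrivial input here, and the paper does not reprove it: it cites \cite[Prop.~3.16]{NIJ}. Without that (or an argument ruling out that the triple cover $C_P\to T_{\theta(P)}W\cap W$ splits for general $P$), your proof of geometric integrality of $\mathcal{C}$ does not go through; the final step (flat over an integral base with geometrically integral generic fibre implies geometrically integral total space, \cite[Prop.~4.3.8]{LIU}) is fine, but its hypothesis is exactly what is missing.

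Two smaller points. First, your claim that the general tangent plane section of $W$ is irreducible is itself fine (a normal cubic surface with isolated singularities which is not a cone has only finitely many lines, so the tangent section at a general point cannot contain a line), but, as above, it does not transfer through the degree-$3$ map. Second, in the flatness step you should be careful that the paper's $\mathcal{C}$ is the incidence variety (reduced); identifying it with your scheme-theoretic hypersurface requires knowing that hypersurface is reduced, and your reducedness argument again leans on the integrality of the general fibre. One can salvage reducedness more cheaply (the general tangent section is reduced and not contained in the branch locus of the generically \'etale map $\theta$, so its pullback is generically reduced, and a hypersurface in a smooth variety is $S_1$), but irreducibility cannot be salvaged this way.
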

\begin{proof}
    The fibers of $p$ are the members of the family $\{C_P\}_{P \in S^o}$. These curves lie in $|{-3K_S}|$, and in particular they all have the same Hilbert polynomial (with respect to any fixed ample line bundle on $S$), making $p$ flat \cite[Thm.~III.9.9]{HAR}. For general $P$, the curve $C_P$ is geometrically integral \cite[Prop.~3.16]{NIJ}. The morphism $p:\mathcal{C} \to S^o$ is now flat over a geometrically integral base with geometrically integral generic fiber, hence $\mathcal{C}$ is geometrically integral \cite[Prop.~4.3.8]{LIU}.
\end{proof}

Recall the blow-up map $\rho\colon\mathcal{E}\rightarrow S$. As already sketched in the introduction, the works \cite{DW} and \cite{NIJ} give:
\begin{proposition}\label{prop:points on T are dense}
The set of rational points $\mathcal{C}(K)$ on $\mathcal{C}$ is dense.
\end{proposition}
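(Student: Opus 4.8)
The plan is to reduce density of $\mathcal{C}(K)$ on the threefold $\mathcal{C}$ to density of the $K$-points of the curves $C_P$ over a Zariski-dense set of $P\in S^o(K)$, and to extract that fiberwise density from the geometric construction of \cite{DW,NIJ}. Since $\pi_1\colon\mathcal{C}\to S^o$ is flat with geometrically integral total space of dimension $3$, the preimage $\pi_1^{-1}(\Theta)$ of any Zariski-dense $\Theta\subseteq S^o(K)$ is Zariski-dense in $\mathcal{C}$; so if we can choose $\Theta$ with $C_P(K)$ Zariski-dense in $C_P$ for all $P\in\Theta$, then $\bigcup_{P\in\Theta}\{P\}\times C_P(K)\subseteq\mathcal{C}(K)$ is dense in each fiber over $\Theta$, hence dense in $\mathcal{C}$, and we are done. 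It thus suffices to produce such a $\Theta$.

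First I would show that
\[
\Theta_0:=\{P\in S(K): \rho^{-1}(P)\ \text{is non-torsion on its fiber of}\ \pi\}
\]
is Zariski-dense in $S$. Under the hypothesis of Theorem~\ref{thm:2}, $S(K)$ is Zariski-dense by \cite[Thm.~3.1]{NIJ}, hence so is $\mathcal{E}(K)$ (as $\rho$ is birational). By Merel's theorem over fields finitely generated over $\mathbb{Q}$ (\cite[Appendix]{DW}) the orders of $K$-rational torsion points on fibers of $\pi$ are uniformly bounded, so the $K$-rational fiberwise-torsion points lie on a finite union of multisections---a proper closed subset of $\mathcal{E}$; removing it leaves $\mathcal{E}(K)$ dense, and applying $\rho$ shows $\Theta_0$ is Zariski-dense in $S$, hence in $S^o$ (which omits only finitely many points of $S$).

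Next comes the main point. By Remark~\ref{rem:properties of C_P}(2), $C_P$ has geometric genus at most $1$ for every $P\in S^o$, and for general $P$ it is geometrically integral with $[-2]P$ a smooth $K$-point of $C_P$ (using Remark~\ref{rem:properties of C_P}(3), noting that $[-2]P\notin\theta^{-1}(\theta(P))$ when $\rho^{-1}(P)$ is non-torsion). If $C_P$ has geometric genus $0$, then $C_P(K)$ is automatically Zariski-dense; if it has geometric genus $1$, density of $C_P(K)$ amounts to $(\widetilde{C_P},[-2]P)$ having positive Mordell--Weil rank. The construction of \cite{DW,NIJ}---whose whole purpose is that the family $\{C_P\}_P$ carries enough $K$-points to force density of $S(K)$---shows that if $\rho^{-1}(P)$ is non-torsion on its fiber then $C_P$ has infinitely many $K$-points. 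Therefore the set $\Theta$ of $P\in\Theta_0\cap S^o$ for which $C_P$ is geometrically integral of geometric genus $\le1$ with $[-2]P$ a smooth point and $C_P(K)$ infinite (hence Zariski-dense) contains $\Theta_0\cap S^o$ away from a proper closed subset of $S$, so it is Zariski-dense by the previous paragraph. Combined with the reduction in the first paragraph, this proves that $\mathcal{C}(K)$ is Zariski-dense in $\mathcal{C}$.

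The hard part is the third paragraph: isolating from \cite{DW,NIJ} the precise statement that an anticanonical fiber $D_P=\rho(E_{\rho^{-1}(P)})$ carrying a non-torsion $K$-point forces infinitely many $K$-points on the corresponding osculating curve $C_P$, and checking that the loci where $C_P$ degenerates (fails to be integral of genus $\le1$) or where $[-2]P$ is not a smooth point of $C_P$ together form a proper closed subset of $S$; everything else is formal. Should the construction of \cite{DW,NIJ} not be packaged in this form, a variant is to restrict $\pi_1$ over a positive-rank anticanonical curve $D\subseteq S$, observe that $\pi_1^{-1}(D\cap S^o)\to D$ is then an elliptic surface with zero section $P\mapsto(P,[-2]P)$, exhibit a section of infinite order on it (again via the \cite{DW,NIJ} construction, or by an intersection-theoretic height computation), apply Silverman's specialization theorem along $D(K)$, and finally let $D$ vary over a Zariski-dense set of anticanonical curves.
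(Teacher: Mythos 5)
Your reduction scheme (slice $\mathcal{C}$ fiberwise over a Zariski-dense set $\Theta$ of points $P$ with $C_P(K)$ dense, and produce $\Theta$ inside the non-torsion locus, which is dense by Nijgh's density theorem plus the uniform torsion bound) is sound as far as it goes, but it hinges entirely on the pointwise claim that, away from a proper closed subset of $S$, non-torsion of $\rho^{-1}(P)$ on its fiber forces $C_P(K)$ to be infinite. This is exactly the hard content, and your proposal does not establish it: it is asserted by appeal to ``the whole purpose'' of \cite{DW,NIJ}, with your own admission that the statement may not be packaged there in this form. Note that the genericity hypotheses in \cite[Thm.~3.1]{NIJ} ($w_0\neq 0$, $3z_0+2f_2w_0/f_3\neq 0$, separability of $f(t)-f(z_0/w_0)$) are conditions on the \emph{fiber} through $P$ only, so it is not clear that the construction applies to an arbitrary non-torsion point rather than to suitable multiples of one; turning non-torsion of $P$ on $D_P$ into a point of infinite order on (the Jacobian of) the genus-$\le 1$ curve $C_P$ is precisely the core lemma one would have to isolate or reprove, and your fallback via an infinite-order section plus Silverman specialization simply relocates that same difficulty (and additionally needs a specialization theorem over fields finitely generated over $\mathbb{Q}$).

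The paper avoids this pointwise extraction by slicing differently: it quotes that the proof of \cite[Thm.~3.1]{NIJ} already shows that, for infinitely many $R\in\mathbb{P}^1(K)$, the \emph{surface} $\mathcal{C}_R=\mathcal{C}\times_{\pi_1,S}D_R$ (the restriction of the family to $P\in D_R$) has dense $K$-points; density of $\mathcal{C}(K)$ in the threefold then follows at once. That statement is what the iterative structure of Nijgh's density argument actually produces (density of good points $Q$ on each good fiber, with $C_Q(K)$ dense for those $Q$), and it is strictly weaker than your claim that \emph{every} non-torsion point off a closed subset has $C_P(K)$ infinite. So while your overall architecture is reasonable, as written the proof has a genuine gap at its central step, and closing it would require either verifying that the needed pointwise lemma really is proved in \cite{DW,NIJ}, or reformulating the extraction at the level of the surfaces $\mathcal{C}_R$ as the paper does.
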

\begin{proof}
The proof of \cite[Thm.~3.1]{NIJ} shows that, for infinitely many $R \in \mathbb{P}^1(K)$, the surface $\mathcal{C}_R:=\mathcal{C} \times_{p,S}D_R$ (i.e.\ the restriction of $\mathcal{C}$ to $P \in D_R$) has dense $K$-rational points. In particular, $\{R \in \mathbb{P}^1(K): (\pi\circ\rho^{-1}\circ p)^{-1}(R)(K) \text{ is dense in } (\pi\circ\rho^{-1}\circ p)^{-1}(R)\}$ is dense in $\mathbb{P}^1(K)$, thus $\mathcal{C}(K)$ is dense in $\mathcal{C}$.
\end{proof}

\begin{proposition}\label{prop:GIGF}
The morphism $\phi: \mathcal{C} \rightarrow S$ has geometrically integral generic fiber.
\end{proposition}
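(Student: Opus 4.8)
The plan is to combine a characteristic-zero reduction with the remark that $\pi_2$ has an obvious rational section — the diagonal — which prevents any non-trivial finite cover of $S$ from being interposed.

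I would first record the reductions that are free because $K$, hence $K(S)$, has characteristic zero. The generic fibre $\mathcal{C}_\eta := \mathcal{C}\times_S\Spec K(S)$ is integral with function field $K(\mathcal{C})$: indeed $\mathcal{C}$ is integral by the preceding proposition, and $\pi_2$ is dominant (shown below), so the generic point of $\mathcal{C}$ lies over $\eta$ and is dense in $\mathcal{C}_\eta$. Moreover $\mathcal{C}_\eta$ is geometrically reduced, since $K(\mathcal{C})/K(S)$ is separable in characteristic zero. It therefore remains to prove that $\mathcal{C}_\eta$ is geometrically irreducible, which in characteristic zero amounts to showing that $K(S)$ is algebraically closed in $K(\mathcal{C})$; equivalently, that $\pi_2$ does not factor as a dominant rational map $\mathcal{C}\dashrightarrow S'$ followed by a finite morphism $S'\to S$ of degree $\geq 2$ with $S'$ integral.

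Next I would exhibit the diagonal section. For $Q\in S^o$ the point $\theta(Q)$ lies on its own tangent plane $T_{\theta(Q)}W$, so $Q\in C_Q$ and hence $(Q,Q)\in\mathcal{C}$; thus $\delta\colon S^o\to\mathcal{C}$, $Q\mapsto(Q,Q)$, is a morphism factoring through the reduced closed subscheme $\mathcal{C}\subseteq S^o\times S$, and it is a section of $\pi_2$ over $S^o$ (in particular $\pi_2$ is dominant). Now suppose, for contradiction, a factorisation $\mathcal{C}\overset{\mu}{\dashrightarrow}S'\overset{\nu}{\to}S$ as in the previous paragraph existed; this happens precisely when $K(S)$ is not algebraically closed in $K(\mathcal{C})$, taking $S'$ to be the normalisation of $S$ in the algebraic closure of $K(S)$ in $K(\mathcal{C})$. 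Then $\mu\circ\delta\colon S\dashrightarrow S'$ would be a rational section of $\nu$. But a finite morphism of degree $\geq 2$ between integral varieties admits no rational section: a section $\sigma$ gives a $K(S)$-linear surjection $\sigma^*\colon K(S')\to K(S)$ with $\sigma^*\circ\nu^*=\id_{K(S)}$, hence a $K(S)$-linear isomorphism between $K(S')$ (of dimension $[K(S'):K(S)]\geq 2$ over $K(S)$) and $K(S)$ — absurd. This contradiction shows $K(S)$ is algebraically closed in $K(\mathcal{C})$, so $\mathcal{C}_\eta$ is geometrically integral.

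I do not anticipate a real obstacle: once the diagonal section is in hand the argument is formal. The only points needing care are the characteristic-zero reductions in the second paragraph (geometric reducedness, and the equivalence between geometric irreducibility of $\mathcal{C}_\eta$ and $K(S)$ being algebraically closed in $K(\mathcal{C})$) and checking that $\delta$ is genuinely a morphism into $\mathcal{C}$ with image the diagonal. (One could instead describe $\pi_2^{-1}(Q)$ explicitly as the pullback along $\theta$ of the polar-quadric section $W\cap P_{\theta(Q)}(W)$ and argue via a Bertini/monodromy analysis, but that route is considerably more laborious.)
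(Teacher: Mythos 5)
Your reduction to ``$K(S)$ is algebraically closed in $K(\mathcal{C})$'' and the observation that the diagonal $\delta(Q)=(Q,Q)$ lands in $\mathcal{C}$ are both fine, but the key step --- composing $\delta$ with the rational map $\mu\colon\mathcal{C}\dashrightarrow S'$ --- has a genuine gap: $\mu$ need not be defined at (the generic point of) the diagonal, and the diagonal is exactly where one should expect it to fail. For general $P$ the curve $C_P$ has a node at $P$ itself, so $\mathcal{C}$ is non-normal precisely along $\Delta$ (the normalization separates the two branches of the node, which is the étale double cover $V\to U$ appearing in the paper); likewise the fiber $\pi_2^{-1}(Q)$, which is the pullback of the polar-quadric section $W\cap P_{\theta(Q)}(W)$, is singular at $(Q,Q)$ because the polar quadric is tangent to $W$ at $\theta(Q)$. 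A $K(S)$-rational point of the generic fiber that is a \emph{singular} point does not force geometric irreducibility: the standard example is the family $x^2=ty^2$ over $\mathbb{A}^1_t$ with the zero section, whose integral total space has generic fiber irreducible but geometrically a pair of conjugate lines crossing at the rational point. Your argument, taken at face value, would ``prove'' geometric integrity for that family too; the hidden hypothesis is that the section meets the fibers in smooth points (equivalently, lies in the locus where $\mu$ is defined), and that hypothesis is violated here. Indeed, if a nontrivial $S'$ existed, the paper's second-extension argument shows $K(S')$ would be exactly the function field of the double cover splitting the branches along $\Delta$, so in that very scenario $\mu\circ\delta$ is undefined and your contradiction evaporates.

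This is why the paper's proof is more elaborate: it works on the normalization $\mathcal{C}^{\mathrm{n}}$, uses the diagonal not as a section but as the locus of nodes (pinning down $K(S')$ inside the degree-$2$ étale extension $K(V)/K(S)$), and then uses a genuinely different rational section --- the $[-2]$-section of $\pi_1$, checked to lie in the \emph{smooth} locus of $\mathcal{C}$ so that it lifts to $\mathcal{C}^{\mathrm{n}}$ --- to embed $K(S')$ also in the degree-$4$ extension $K(S)_2$, and finally rules out a common subextension via torsion-freeness of the Mordell--Weil group of $\mathcal{E}$. To salvage your approach you would need a rational section of $\pi_2$ through smooth points of the fibers (or through the normal locus of $\mathcal{C}$), and the diagonal is provably not such a section.
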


\begin{proof}
Assume that $\phi$ does not have a geometrically integral generic fiber; then the same holds for $\phi^{\text{n}}$, and we get a relative normalization decomposition:
\begin{equation}\label{Eq:Stein}
    \phi^{\text{n}}:\mathcal{C}^{\text{n}} \xrightarrow{g} S' \xrightarrow{h} S,
\end{equation}
 where $S'$ is integral, $g$ has geometrically integral fibers, and $h$ is finite of degree at least $2$. We are going to derive a contradiction by showing that the function field extension $K(S')/K(S)$ is a subextension of two linearly disjoint extensions of $K(S)$.

\vskip1mm

{\bf First extension.} The map $p$ admits a rational section $S \dashrightarrow \mathcal{C}, \ P \mapsto (P,Q)$ with $Q \in D_P$ such that $Q=[-2]P$ on the elliptic curve $D_P$ with origin $\mathcal{O}$ (see Remark~\ref{rem:properties of C_P}). For general $P$, this point $Q$ has multiplicity $1$ in the intersection $D_P \cap C_P$ and is thus smooth in $C_P$, hence $(P,Q)$ lies in the smooth locus of $\mathcal{C}$. In particular, this section lifts to a map $s:S \dashrightarrow \mathcal{C}^{\text{n}}$. The composition $\phi^{\text{n}} \circ s: S \dashrightarrow S$ is the map $[-2]:S \dashrightarrow S$, and thus $K(S')$ is contained in the degree-$4$ extension $K(S)_2/K(S)$ (with $K(S)_2 \cong K(S)$ as abstract fields) given by multiplication by $[-2]$.

\vskip1mm

{\bf Second extension.}
For general $P$, the restriction of the normalization morphism $\nu:\mathcal{C}^{\text{n}} \to \mathcal C$ above $C_P \cong \{P\} \times C_P \subset \mathcal C$ is the normalization morphism $\nu_P:C_P^{\text{n}} \to C_P$ of $C_P$. Since $C_P$ has a simple node at $P$, the inverse image $\nu_P^{-1}(P)$ comprises two (reduced) geometric points. It follows that the inverse image $V$ in $\mathcal C^{\text{n}}$ of a sufficiently small non-empty Zariski-open $U$ of the diagonal $\Delta:= \{(P,P) : P \in S^o\} \subset \mathcal C$  is an étale double cover $V \to U$. The composition
\[
V \to \mathcal{C}^{\text{n}} \xrightarrow{\phi^{\text{n}}} S
\]
coincides with the composition $V \to U \subset \Delta \cong S^o\subset S$. It is thus a generically finite map of degree $2$, and by \eqref{Eq:Stein} the field extension $K(S')/K(S)$ of degree $\geq 2$ embeds in the étale extension $K(V)/K(S)$ of degree $2$. Hence $K(S')=K(V)$. 

\vskip1mm

{\bf Linear disjointness.} Since the Mordell--Weil group of $\mathcal{E}$ is torsion-free \cite[Thm.~7.4]{SCSH}, the degree-$4$ extension $K(S)_2/K(S)$ given by multiplication by $[-2]$ has no non-trivial proper subextensions, as any of these would have degree $2$ and would give rise to a cyclic 2-isogeny on the elliptic surface $\mathcal E$, hence to a non-trivial $2$-torsion section. This immediately implies linear disjointness and concludes the proof. \qedhere
\end{proof}

\begin{proof}[Proof of Theorem \ref{thm:2}]
Consider the family of curves $\{C_P\}_{P\in S^o}$ in $S$, and take the corresponding family of inverse images $\{\rho^{-1}(C_P)\}_{P\in S^o}$ in $\mathcal E$. 

The theorem follows by applying Theorem \ref{thm:1} to $\mathcal E$ endowed with the fibration $\pi$ and the family $\{\rho^{-1}(C_P)\}_{P\in S^o}$. We verify the hypotheses below. 

\vskip1mm

{\bf Geometrically integral generic fiber.} The total space for the family $\{\rho^{-1}(C_P)\}_{P\in S^o}$ is the fiber product $\mathcal C':= \mathcal C \times_{\phi,S,\rho} \mathcal E$. The generic fiber of $\mathcal C' \to \mathcal E$ is the same as the generic fiber of $\mathcal C \to S$, and this is geometrically integral by Proposition \ref{prop:GIGF}.

\vskip1mm

{\bf Simple connectedness.} All the fibers of $\pi$ are geometrically integral (they are Weierstrass curves), hence the only irreducible divisors of $\mathcal E$ that are vertical for $\pi$ are precisely these fibers, i.e.\ the inverse images $\rho^{-1}(D)$ of the curves $D$ in the pencil $|{-K_S}|$. Proposition~\ref{prop:generic CR with fibers} implies that the general curve $C_P$ intersects all $D \in |{-K_S}|, D \neq \{w=0\}$ transversally , hence the general inverse image $\rho^{-1}(C_P)$ intersects $\rho^{-1}(D)$ transversally for all such $D$. It follows that $F$ is empty, so $S\setminus F=S$ is simply connected.

\vskip1mm

{\bf Zariski-density.} $\mathcal C(K)$ is dense in $\mathcal C$ by Proposition \ref{prop:points on T are dense} (essentially by Nijgh's work). Since $\mathcal C'$ is birational to $\mathcal C$, we see that $\mathcal C'(K)$ is dense in $\mathcal C'$. \qedhere
\end{proof}

To conclude, we prove Theorem~\ref{thm:3}.

\begin{proof}[Proof of Theorem~\ref{thm:3}]
    Let $U \subset \A_K^9$ be the non-empty Zariski-open whose $\bar K$-points are $9$-tuples $(a,b,\ldots,f_3)$ corresponding to a smooth $S$, and let $X \subset U \times \P(2,3,1,1)$ be the total space associated to the parametric family of Definition \ref{DefS}. Then the projection $\pi=pr_1:X \to U$ is smooth surjective and its fibers are del Pezzo surfaces of degree $1$. Note that $X$ is birational to the affine hypersurface $Y$ in $\A_K^{11}$ defined by the equation
    \[
    y^2  = x^3 + (af(z)+b) x + (cf(z)^2+df(z)+e),
    \]
    with $f(t) := f_3t^3 + f_2 t^2 + f_1 t + f_0$. The hypersurface $Y$ is rational via projection to any coordinate appearing linearly, e.g.\ $e$. 

    For $N \in \mathbb{Z}_{\geq 1}$, let $V_N \subset X$ be the non-empty Zariski-open corresponding to pairs $(\underline{a},P), \underline{a}=(a,\ldots,e,f_0,\ldots,f_3), P=[x_0:y_0:z_0:w_0]$ such that $w_0 \neq 0, 3z_0+2f_2w_0/f_3 \neq 0,$ the polynomial $f(t)-f(z_0/w_0)$ is separable, and $[N](\rho^{-1}(P)) \neq O$ on $\mathcal{E}_P$. Choose now $N$ to be divisible by the torsion exponents of all elliptic curves over $K$. This is a finite number by (the generalization of) Merel's theorem.

    For any $x \in V_N(K)$, the del Pezzo surface $\pi^{-1}(\pi(x))$ has the Hilbert property by Theorem \ref{thm:2}. Now, the morphism $\pi|_{V_N}:V_N \to U$ has geometrically integral generic fiber and thus sends non-thin sets to non-thin sets. In particular, since $V_N(K)$ is non-thin by Hilbert's Irreducibility Theorem (recall that $X$, and thus $V_N$, is rational), the set $\pi(V_N(K))$ is non-thin.

    Finally, for $\eta$ the generic point of $U$, we have the following properties for $X \to U$:
    \begin{enumerate}
        \item for every $u \in U$, we have $\operatorname{rk} \Pic X_u \geq \operatorname{rk} \Pic X_{\eta}$;
        \item the set of $u \in U$ such that $\operatorname{rk} \Pic X_u > \operatorname{rk} \Pic X_{\eta}$ is thin.
    \end{enumerate}
    The first property follows from injectivity of specialization on the Picard group (see e.g.\ \cite[\S5]{VA}). More precisely, in (2) we mean that there exists a finite collection of irreducible covers $\pi_i:Y_i \to U, i=1,\ldots,r$ of finite degrees $>1$ such that $\operatorname{rk} \Pic X_u > \operatorname{rk} \Pic X_{\eta}$ if and only if $u \in \operatorname{im} (Y_i(\kappa(u))\to U(\kappa(u)))$. This holds by applying Hilbert's Irreducibility Theorem to the family of degree-240 polynomials whose roots parametrize the lines in each surface in our family. The Galois orbits of lines correspond to the irreducible factors of these polynomials and determine the Picard rank. The Picard rank can only increase if an irreducible factor becomes reducible upon specialization, which occurs over a thin locus. Thus the Picard rank can only increase over a thin locus. By a result of Kloosterman \cite[Lem.~2.5,~Prop.~2.9]{Kloosterman}, reproving a result of Desjardins and Naskrecki \cite[Sec.~4.2]{DN}), the del Pezzo surface
    \[
    y^2=x^3+Az^6+Bw^6,
    \]
    where we take $A$ and $B$ to be transcendental variables over $K$, has Picard rank $1$. This surface is a member of the family $X \to U$: it is the specialization at the generic point $\xi$ of the copy of $\A^2_K$ in $\A^9_K$ defined by $f_3=A, e=B, c=1$, and where all the other coordinates are trivial. Then by (1) we get $1 \leq \operatorname{rk} \Pic X_{\eta} \leq  \operatorname{rk} \Pic X_\xi =1$, thus $\operatorname{rk} \Pic X_{\eta}=1$. Hence by (2) the set $T \subset U(K)$ corresponding to del Pezzo surfaces with Picard rank $\geq 2$ is thin. Then $\pi(V_N(K)) \setminus T$ furnishes the sought non-thin set of parameters.
\end{proof}


\bibliographystyle{plain}
\bibliography{bibliography}
\end{document}